\theoremstyle{definition}
 \newtheorem{theorem}{\bf Theorem}[section]
 \newtheorem{lem}[theorem]{Lemma}
 \newtheorem{cor}[theorem]{Corollary}
\theoremstyle{definition}
 \newtheorem{example}[theorem]{Example}
 \newtheorem{remark}[theorem]{Remark}
 \newtheorem{definition}[theorem]{Definition}
 \newtheorem{prop}[theorem]{Proposition}
\numberwithin{equation}{section}
\newcommand{\bb}[1]{\mathbb{#1}}
\newcommand{\M} {\mathcal{M}}
\author[A. Marinkovi\'c \and Milena Pabiniak]{Aleksandra Marinkovi\'c \and Milena Pabiniak}
\address{Centro de An\'alise Matem\'atica, Geometria e Sistemas Din\^amicos, Instituto Superior T\'ecnico, Av. Rovisco Pais, 1049-001 Lisboa, Portugal}
\email{aleksperisic(at)yahoo.com \and milenapabiniak(at)gmail.com}
\title[Centered reduction]{Every symplectic toric orbifold is a centered reduction of a Cartesian product of weighted projective spaces}
\begin{document}

\maketitle

\begin{abstract}
We prove that every symplectic toric orbifold is a centered reduction of a Cartesian product of weighted projective spaces. A theorem of Abreu and Macarini shows that if the level set of the reduction passes through a non-displaceable set then the image of this set in the reduced space is also non-displaceable. Using this result we re-prove that every symplectic toric orbifold contains a non-displaceable fiber and we identify this fiber.
\end{abstract}

\section{Introduction}\label{section introduction}
A {\bf symplectic toric manifold (orbifold)} is a symplectic manifold (orbifold) $(\M,\omega)$ 
equipped with an effective Hamiltonian action of a torus $T$, with $\dim \M = 2 \dim T$. 
Each symplectic toric orbifold has an associated moment map, $\mu:\M \rightarrow  \mathfrak{t}^*$, 
to the dual of the Lie algebra of $T$, which is unique up to translation.

If $\M$ is compact then the image of $\mu$ is a compact convex polytope in $ \mathfrak{t}^*$, 
the convex hull of the images of the fixed points of the action (Atiyah \cite{Atiyah:1982}, Guillemin, Sternberg \cite{guillstern:1982}). 
Moreover, the polytope is Delzant, i.e. it is simple, rational and smooth (see Section \ref{section shrinking} for precise definitions). Delzant in \cite{Delzant:1988} proved that compact symplectic toric manifolds, up to equivariant symplectomorphism 
 (definition in Section \ref{section shrinking}), are  classified by their moment map images, up to translation in $\mathfrak{t}^*$.
 In the proof he uses the data encoded in the moment polytope to show that every such manifold can be obtained from $\bb{C}^d$, with the standard symplectic structure, by a symplectic reduction (for some $d \in \bb{N}$).
This result was generalized to compact symplectic toric orbifolds by Lerman and Tolman \cite{LermanTolman:1997}. Compact symplectic toric orbifolds (up to equivariant symplectomorphism)
  can be classified by compact, convex, rational and simple polytopes (not necessarily smooth) with positive integers attached to each facet (up to translation, keeping labels fixed). 
We will call a convex rational polytope with positive integer labels attached to each facet a {\bf labeled polytope}. Compact symplectic toric orbifolds are also symplectic reductions of $\bb{C}^d$.

Thanks to the above correspondence between labeled polytopes and symplectic toric orbifolds we can study the combinatorial properties of polytopes in order to obtain some geometric information about the associated toric orbifolds. Note that $ \mathfrak{t} ^*$ is isomorphic (though not canonically) to $\mathbb{R}^{n}$, where $n$ denotes the dimension of $T$.
In Section \ref{section shrinking} we specify an identification of $ \mathfrak{t} ^*$ with $\mathbb{R}^n$ that we use throughout the paper.
This allows us to think about $\mu (\M)$ as a subset of $\mathbb{R}^n$.
Every convex rational polytope is an intersection of some number of half spaces, and it can be uniquely written as
$\Delta=\bigcap_{i=1}^d\{x\in\mathbb{R}^n |\langle x,w_i\rangle\leq\,l_i\},$
where $d$ is the number of facets, $\,l_i$ are real numbers, and the vectors $w_i\in\mathbb{Z}^n$ are primitive outward normals to the facets of the polytope. We denote by $\langle \cdot,\cdot\rangle:\mathbb{R}^n\times\mathbb{R}^n\rightarrow\mathbb{R}$ the standard inner product. Moreover, if $\Delta$ is a labeled polytope, then the labels $a_i\in \mathbb{N}$ can be incorporated in the description of the polytope by presenting the above half spaces as $\{x\in\mathbb{R}^n |\langle x,a_i w_i\rangle\leq\,a_i l_i\}$. Hence, every convex rational labeled polytope can be uniquely presented as $$\Delta=\bigcap_{i=1}^d\{x\in\mathbb{R}^n |\langle x,v_i\rangle\leq\lambda_i\},$$
where  
$v_i=a_i w_i$ and $w_i$ is the primitive outward normal, $\lambda_i=a_i \,l_i$ and $a_i \in \bb{N}$ is the label on the corresponding facet.
Therefore the polytope has a trivial labeling (all labels equal to $1$) if and only if all the $v_i$'s are primitive. We say that a labeled polytope is {\bf monotone} if $\lambda_i=\lambda$ for every $i=1,\ldots, d.$ Note that changing the labels may change monotonicity (see Figure \ref{figurezeroexample}).

In Section \ref{section shrinking}, motivated by the work of Reid \cite{Reid:1983} and the toric minimal model program of Gonzales and Woodward \cite{Gonzales.Woodward:2012}, we define a procedure of shrinking a labeled polytope $\Delta \subset \mathbb{R}^n$ to a point. The idea is to move each facet of $\Delta$ inward by reducing each $\lambda_i$ by the same amount. As a result we either get a point or a lower dimensional polytope. In the second case we carry on shrinking this lower dimensional polytope, and continue till we get to a point. We denote by $M+1$ the number of ``dimension drops'' that occur while shrinking. 
The Lagrangian torus fiber which is the preimage (under the moment map) of the final point of the shrinking procedure is called the {\bf central fiber}. Note that this definition does not depend on the choice of identification $\mathfrak{t}^*\cong \bb{R}^n$. 
If this final point is the origin, the polytope is called {\bf centered at the origin}. Note that every monotone polytope is centered at the origin and that
any polytope can be made centered at the origin by an appropriate translation (adding a constant to the chosen moment map). Thus we will always assume that the moment map image $\mu(\M)=\Delta$ is a polytope centered at the origin.
With this assumption the central fiber is $\mu^{-1}(0)$, the preimage of the origin $0$. 

We are ready to state the first theorem.
\begin{theorem}\label{prva}
Every (compact, convex, rational) centered at the origin, labeled polytope is an intersection of (compact, convex, rational) monotone labeled polytopes in one of the following ways:
\begin{equation}\label{broj2}
\Delta=\bigcap_{k=0}^N \widetilde{\Delta}^n_k
\end{equation}
if $M=0$ or
\begin{equation}\label{broj22}
\Delta=\bigcap_{k=0}^N \widetilde{\Delta}^n_k\hskip1mm\cap\bigcap_{j=1}^M(\widetilde{\Delta}^{k_1+\cdots+ k_j}\times\mathbb{R}^{n-(k_1+\cdots+ k_j)})
\end{equation}
if $M>0$, where $\dim\widetilde{\Delta}^n_k=\dim\Delta=n$, for every $k\in\{0,\ldots, N\},$ and $\dim\widetilde{\Delta}^{k_1+\cdots +k_j}=k_1+\cdots+ k_j$ for every $j\in\{1,\ldots, M\}$.
\end{theorem}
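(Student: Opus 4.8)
The plan is to run the shrinking procedure of Section~\ref{section shrinking} and record, at each step, a monotone polytope whose defining inequalities are exactly the ``active'' inequalities of $\Delta$ at that stage. Concretely, write $\Delta=\bigcap_{i=1}^d\{x\in\mathbb{R}^n \mid \langle x,v_i\rangle\leq\lambda_i\}$ with the polytope centered at the origin, so $\lambda_i\ge 0$ for all $i$ (the origin lies in $\Delta$). Shrinking moves all facets inward by a common parameter $t\ge 0$, i.e.\ replaces $\lambda_i$ by $\lambda_i-t$; let $t_0<t_1<\cdots<t_N$ be the successive values of $t$ at which the dimension of the shrunk region is locally constant, with $t_0=0$ and $t_N$ the value at which we first hit a face. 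For each $k\in\{0,\dots,N\}$ I would set
\begin{equation*}
\widetilde{\Delta}^n_k=\bigcap_{i=1}^d\{x\in\mathbb{R}^n \mid \langle x,v_i\rangle\leq \lambda_i-t_k\}+(\text{recentering translation}),
\end{equation*}
which is by construction monotone (all right-hand sides equal after we subtract the common value, or rather all equal to the same constant once we normalize), compact, convex, rational, $n$-dimensional, and contains $\Delta$. The key point to verify is that $\Delta=\bigcap_{k=0}^N\widetilde{\Delta}^n_k$: the inclusion $\subseteq$ is clear since each $\widetilde{\Delta}^n_k\supseteq\Delta$, and for $\supseteq$ one observes that an inequality $\langle x,v_i\rangle\leq\lambda_i$ of $\Delta$ is ``witnessed'' as a facet inequality of $\widetilde{\Delta}^n_k$ for a suitable $k$ — namely, for $t$ slightly less than the value of $t$ at which facet $i$ would disappear, the hyperplane $\langle x,v_i\rangle=\lambda_i-t_k$ still bounds the shrunk polytope, so the constraint is retained (after translating back). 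This requires checking that every facet of $\Delta$ remains a genuine facet in at least one intermediate monotone polytope, which follows because a facet only ``vanishes'' (stops contributing) at a dimension drop, and we have sampled a parameter value strictly inside each constancy interval.

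When $M>0$, i.e.\ a genuine dimension drop occurs, the shrinking procedure continues on a polytope $\Delta'$ living in an affine subspace of dimension $k_1$; after applying our chosen identification $\mathfrak{t}^*\cong\mathbb{R}^n$ we may assume this subspace is $\mathbb{R}^{k_1}\times\{0\}$, so $\Delta'\subset\mathbb{R}^{k_1}$ is again a centered, labeled polytope of strictly smaller dimension. By induction on $n$ (or on $M$), $\Delta'$ decomposes as an intersection of monotone polytopes of the two prescribed forms. Each monotone $k_1$-dimensional piece $\widetilde{\Delta}^{k_1}$ contributes the cylinder $\widetilde{\Delta}^{k_1}\times\mathbb{R}^{n-k_1}$ to the intersection for $\Delta$; the finitely many $n$-dimensional monotone polytopes produced before the first drop contribute the $\widetilde{\Delta}^n_k$ terms; and deeper drops at dimensions $k_1+k_2$, \ldots, $k_1+\cdots+k_M$ contribute the remaining cylinders. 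Assembling these and checking that their common intersection is exactly $\Delta$ — using that each cylinder contains $\Delta$, and that no facet inequality of $\Delta$ is lost across the whole process — gives \eqref{broj22}. The translations needed to keep intermediate polytopes centered at the origin do not affect monotonicity and can be absorbed, since any polytope can be recentered by translation as noted in the text.

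The main obstacle, I expect, is the bookkeeping around dimension drops: one must argue carefully that the set of facet normals $v_i$ that are ``active'' on a given constancy interval, together with the common right-hand side, really does cut out a monotone polytope that (i) is full-dimensional in the ambient $\mathbb{R}^n$ before a drop, or in the appropriate subspace after, (ii) contains $\Delta$, and (iii) collectively, over all intervals and all levels of recursion, the retained inequalities are precisely those defining $\Delta$ with nothing spurious added. In particular one needs to rule out the possibility that intersecting all the monotone pieces introduces an inequality strictly stronger than some $\langle x,v_i\rangle\le\lambda_i$; this is where centeredness is essential, since it guarantees the origin survives to the end and every intermediate right-hand side $\lambda_i-t_k$ stays nonnegative, so no monotone piece ``overshoots'' past a facet of $\Delta$. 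Once this combinatorial accounting is set up cleanly, the proof is a straightforward induction on $\dim\Delta$ with the shrinking procedure supplying the inductive step.
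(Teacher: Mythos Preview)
Your construction has a basic error that makes the argument collapse. You define
\[
\widetilde{\Delta}^n_k=\bigcap_{i=1}^d\{x\in\mathbb{R}^n \mid \langle x,v_i\rangle\leq \lambda_i-t_k\}+(\text{recentering translation})
\]
and assert this is monotone because ``all right-hand sides equal after we subtract the common value''. But subtracting the \emph{same} $t_k$ from each $\lambda_i$ does not make the $\lambda_i-t_k$ equal to one another; they are still pairwise distinct whenever the original $\lambda_i$ were. No translation fixes this: translating by $v$ replaces $\lambda_i-t_k$ by $\lambda_i-t_k+\langle v,v_i\rangle$, which again need not be constant in $i$. So your $\widetilde{\Delta}^n_k$ is simply the shrunk polytope $\Delta^{t_k}$ (up to translation), which is neither monotone nor a superset of $\Delta$---it is a \emph{subset}. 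Consequently the inclusion $\Delta\subseteq\widetilde{\Delta}^n_k$ you call ``clear'' is false, and the intersection of your pieces is the smallest one rather than $\Delta$.

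The paper's construction is genuinely different and is what you are missing. One does not sample the shrinking at intermediate times; instead one groups the facet indices by the value of $\lambda_i$. The indices with $\lambda_i$ equal to a dimension-drop time $t_j$ form $\mathcal{D}_j$; the remaining values $\lambda_{j_1}<\cdots<\lambda_{j_N}$ (all $>t_{M+1}$) give groups $\mathrm{I}_k$. Then
\[
\widetilde{\Delta}^n_k=\bigcap_{i\in\mathcal{D}_1\cup\cdots\cup\mathcal{D}_{M+1}\cup\mathrm{I}_k}\{x\mid\langle x,v_i\rangle\le\lambda_{j_k}\},
\]
i.e.\ one \emph{relaxes} every $\mathcal{D}$-inequality up to the common value $\lambda_{j_k}$, which is what makes the polytope monotone and a genuine superset of $\Delta$. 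The lower-dimensional pieces $\widetilde{\Delta}^{k_1+\cdots+k_j}$ live in $\mathbb{R}^{k_1+\cdots+k_j}$, the span of the normals in $\mathcal{D}_1\cup\cdots\cup\mathcal{D}_j$ (not in the subspace where the shrinking continues, which has dimension $n-(k_1+\cdots+k_j)$; your dimension bookkeeping here is also inverted). With these definitions both inclusions are checked directly, without induction: each original inequality $\langle x,v_i\rangle\le\lambda_i$ is recovered exactly from the single piece indexed by the group containing $i$.
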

We obtain the information needed to construct the polytopes $\widetilde{\Delta}^n_k,\widetilde{\Delta}^l$  in Theorem \ref{prva}  by analyzing how the face structure of $\Delta$ changes while shrinking.
\begin{remark} $\Delta$ being smooth or simple does not imply that the polytopes on the right hand side of \eqref{broj2} and \eqref{broj22} are either smooth or simple. See Examples \ref{inter.newfirstexample} and \ref{inter.verynewfirst}.
\end{remark}
In our next result we use purely combinatorial Theorem \ref{prva} to present each symplectic toric orbifold as a ``centered" reduction of a product of weighted projective spaces, as we now explain.
\vskip1mm
A weighted projective space $\mathbb{CP}(m_1,\ldots,m_{d})$, with $(m_1,\ldots,m_d) \in \mathbb{Z}_+^d$ coprime, is a toric symplectic orbifold that is a symplectic reduction of $(\mathbb{C}^d,\frac{i}{2}\sum_{k=1}^ddz_k\wedge d\bar{z}_k)$ with respect to a circle action:
 $t*(z_1,\ldots,z_d)\rightarrow(t^{m_1}z_1,\ldots,t^{m_d}z_d).$ There is a toric $T^{d-1}$ action on $\mathbb{CP}(m_1,\ldots,m_{d})$, namely the residual action coming from the standard $T^d-$action on $\mathbb{C}^d.$ When at least one of the weights $m_j$ is equal to 1 (say $m_d=1$), then the corresponding polytope (centered at the origin) is
 $$P=\bigcap_{i=1}^{d-1}\{x\in\mathbb{R}^{d-1}|\hskip1mm\langle x,-e_i\rangle\leq\lambda\}\cap\{x\in\mathbb{R}^{d-1}|\hskip1mm\langle x,(m_1,\ldots,m_{d-1})\rangle\leq \lambda\}$$
 where $e_i,\,i \in\{1,\ldots,d-1\}$ are coordinate vectors and $\lambda$ is a positive real number responsible for rescaling the symplectic form. If all the weights are trivial, i.e. $m_1=\ldots=m_d=1$ we obtain a toric symplectic manifold $\bb{CP}^{d-1}$. The central fiber of $\mathbb{CP}(m_1,\ldots,m_{d})$ is 
 $$T_{0}=\{[z_1,\ldots,z_{d}]\in\mathbb{CP}(m_{1},\ldots,m_{d})|\hskip1mm |z_1|^2=\cdots=|z_{d}|^2\}.$$ The central fiber in a Cartesian product of weighted projective spaces is the product of the central torus fibers of each weighted projective space. 

 \begin{definition}
 A reduction of a Cartesian product of weighted projective spaces is called a {\bf centered reduction} if it goes through the central torus fiber of the product.
\end{definition}
Any symplectic toric orbifold can be presented as a symplectic reduction of one weighted projective space, but this reduction is not necessarily centered (see Proposition \ref{mon.symp.red}).
It is worth looking for a centered reduction because often important symplectic information, for example, non-displaceability (see \cite[Corollary 3.4(i)]{AbreuMacarini:2013}), or  quasimorphisms (see \cite[Theorem 1.1]{Borman:2013}), is preserved under centered reduction.
 Our second result is the following.
\begin{theorem}\label{druga}
Every compact symplectic toric orbifold is a centered reduction of a Cartesian product of weighted projective spaces.
\end{theorem}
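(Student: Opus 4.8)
The plan is to deduce Theorem~\ref{druga} from the purely combinatorial Theorem~\ref{prva}, using the standard dictionary between labeled polytopes, Cartesian products, intersections of polytopes, and symplectic reduction. Let $\M$ be a compact symplectic toric orbifold with moment polytope $\Delta=\bigcap_{i=1}^{d}\{x\in\R^{n}:\langle x,v_{i}\rangle\le\lambda_{i}\}$, centered at the origin; as the shrinking procedure ends at $0$ we have $0\in\Delta$ and hence $\lambda_{i}\ge 0$ for all $i$. By Theorem~\ref{prva}, $\Delta$ is an intersection of monotone labeled polytopes: full-dimensional pieces $\widetilde\Delta^{n}_{k}=\bigcap_{i\in I_{k}}\{x:\langle x,v_{i}\rangle\le c_{k}\}$ for $k=0,\dots,N$, together with (when $M>0$) lower-dimensional monotone pieces $\widetilde\Delta^{l_{j}}\subset\R^{l_{j}}$, $l_{j}=k_{1}+\cdots+k_{j}$, which appear in \eqref{broj22} as $\widetilde\Delta^{l_{j}}\times\R^{n-l_{j}}$; the facet normals (with their labels) of every piece are among the $v_{i}$ of $\Delta$, since shrinking only decreases the $\lambda_{i}$ uniformly. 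I will (1) realize each monotone piece as a centered reduction of a weighted projective space, (2) realize $\Delta$ as a centered reduction of the Cartesian product of those pieces, and (3) compose.

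\emph{Step 1.} Let $Q=\bigcap_{i\in I}\{x\in\R^{l}:\langle x,v_{i}\rangle\le c\}$ be a positive-dimensional monotone labeled polytope. Compactness forces the $v_{i}$, $i\in I$, to positively span $\R^{l}$, so there is a relation $\sum_{i\in I}\mu_{i}v_{i}=0$ with coprime positive integers $\mu_{i}$; fix one. By the Lerman--Tolman construction, $Q$ is a reduction of $\mathbb{C}^{|I|}$ (standard symplectic form, suitably rescaled) by the subgroup $\widetilde K\subset T^{|I|}$ whose Lie algebra is $\ker\big(\R^{|I|}\to\R^{l},\; e_{i}\mapsto v_{i}\big)$, at the level prescribed by $c$. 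Since $\sum_{i}\mu_{i}v_{i}=0$, the circle $S^{1}_{\mu}\subset T^{|I|}$ acting with weights $\mu_{i}$ lies in $\widetilde K$, so this reduction factors through $\mathbb{C}^{|I|}$ reduced by $S^{1}_{\mu}$, which is the weighted projective space $W_{Q}$ with weights $(\mu_{i})_{i\in I}$ at symplectic scale $\Lambda:=c\sum_{i}\mu_{i}$: thus $Q$ is a reduction of $W_{Q}$ by the residual subtorus $\widehat K=\widetilde K/S^{1}_{\mu}$, and reading off the $v_{i}$ shows the reduced labeled polytope is $Q$ on the nose. This reduction is \emph{centered}: the image of the central torus fiber of $W_{Q}$ in the moment polytope has all coordinates equal to $\Lambda/\sum_{i}\mu_{i}=c$, and the affine slice cutting $Q$ out of that simplex is $\{(c-\langle x,v_{i}\rangle)_{i\in I}:x\in\R^{l}\}$ (here the equality of all the $\lambda_{i}$ is exactly the monotone hypothesis), which contains that point as its value at $x=0$; so the slice meets the central torus fiber of $W_{Q}$, and its image $0\in Q$ is the central fiber of $Q$. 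Applying this to each $\widetilde\Delta^{n}_{k}$ and each $\widetilde\Delta^{l_{j}}$ gives weighted projective spaces $W_{k}$ and $W'_{j}$; a zero-dimensional $Q$ is a point and is trivially such a reduction.

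\emph{Steps 2 and 3.} Let $\mathcal P$ be the Cartesian product of the toric orbifolds with moment polytopes $\widetilde\Delta^{n}_{k}$ ($k=0,\dots,N$) and $\widetilde\Delta^{l_{j}}$ ($j=1,\dots,M$), so $\mathfrak t^{*}_{\mathcal P}=(\R^{n})^{N+1}\times\prod_{j}\R^{l_{j}}$ and the moment polytope of $\mathcal P$ is $\prod_{k}\widetilde\Delta^{n}_{k}\times\prod_{j}\widetilde\Delta^{l_{j}}$. The map $\mathfrak t_{\mathcal P}\to\R^{n}$, $(y_{k})_{k}\oplus(z_{j})_{j}\mapsto\sum_{k}y_{k}+\sum_{j}\iota_{l_{j}}(z_{j})$ — where $\iota_{l_{j}}\colon\R^{l_{j}}\hookrightarrow\R^{n}$ includes the first $l_{j}$ coordinates — is surjective already on integer lattices, so its kernel is the Lie algebra of a closed subtorus $K\subset T_{\mathcal P}$, and the dual inclusion $\R^{n}\hookrightarrow\mathfrak t^{*}_{\mathcal P}$ is $x\mapsto\big((x)_{k},(P_{l_{j}}x)_{j}\big)$ with $P_{l_{j}}\colon\R^{n}\to\R^{l_{j}}$ the projection onto the first $l_{j}$ coordinates. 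Reducing $\mathcal P$ by $K$ at level $0$, the moment polytope is $\big(\prod_{k}\widetilde\Delta^{n}_{k}\times\prod_{j}\widetilde\Delta^{l_{j}}\big)$ intersected with the image of $\R^{n}$, i.e.\ $\bigcap_{k}\widetilde\Delta^{n}_{k}\cap\bigcap_{j}\big(\widetilde\Delta^{l_{j}}\times\R^{n-l_{j}}\big)=\Delta$ by Theorem~\ref{prva}, and tracking normals identifies the reduced orbifold with $\M$. This reduction is centered: each factor polytope contains $0$ (being monotone), so the central torus fiber of $\mathcal P$ — the product of the central fibers of the factors — lies over $(0,\dots,0)$, which maps to $0\in\mathfrak k^{*}$. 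Finally, symplectic reduction commutes with Cartesian products and may be carried out in stages, and a composite of centered reductions is centered; so composing the centered reduction of $\mathcal P$ onto $\M$ with the product of the centered reductions of $\prod_{k}W_{k}\times\prod_{j}W'_{j}$ onto $\mathcal P$ from Step~1 exhibits $\M$ as a centered reduction of a Cartesian product of weighted projective spaces.

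The part requiring genuine care — and the reason for routing through Theorem~\ref{prva} rather than arguing in one step — is the bookkeeping of \emph{labels} and \emph{centeredness}: one must check that the linear maps above are surjective over $\mathbb{Z}$ so their kernels are honest subtori in the orbifold sense, that the prescribed reduction levels are regular values so that each reduced space is the intended \emph{labeled} polytope and not merely a polytope of the right combinatorial type, and that ``passing through the central torus fiber'' is preserved under composition of reductions. Monotonicity is exactly what makes the reductions of Step~1 centered: if one instead tried to present $\Delta$ directly as a reduction of one weighted projective space built from a single positive relation $\sum_{i}\mu_{i}v_{i}=0$ among all the $v_{i}$, then the image of its central torus fiber in the ambient simplex has all coordinates equal to $\big(\sum_{i}\mu_{i}\lambda_{i}\big)/\big(\sum_{i}\mu_{i}\big)$, and this need not lie on the slice $\{(\lambda_{i}-\langle x,v_{i}\rangle)_{i}:x\in\R^{n}\}$ once the $\lambda_{i}$ are not all equal, i.e.\ once $\Delta$ fails to be monotone.
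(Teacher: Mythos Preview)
Your approach --- first realizing each monotone piece $\widetilde\Delta^n_k$, $\widetilde\Delta^{l_j}$ as a toric orbifold, forming their Cartesian product $\mathcal P$, and then reducing $\mathcal P$ to $\M$ --- has a genuine gap: the monotone polytopes produced by Theorem~\ref{prva} need \emph{not be simple}. When a piece fails to be simple, the Lerman--Tolman construction does not apply to it (the reduction level is not regular), so there is no toric orbifold attached to that polytope and your intermediate space $\mathcal P$ is not defined as a symplectic orbifold. This is not hypothetical: the paper notes it in the Remark after Theorem~\ref{prva}, and Example~\ref{inter.verynewfirst} exhibits a smooth $\Delta$ whose $\widetilde\Delta^3_0$ is neither simple nor smooth. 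Your closing paragraph lists ``regular values'' among the points needing care --- this is precisely the point that fails.

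The paper's proof is organized to sidestep this. It never constructs $\mathcal P$; instead it uses Theorem~\ref{prva} only to rewrite the \emph{simple} polytope $\Delta$ as an intersection of $d=\sum d_k$ half-spaces (some possibly redundant), applies Lerman--Tolman once to $\Delta$ to obtain $\M=\mathbb C^d/\!\!/K$, and then splits $K$ as a product of circles $K_l$ --- one per monotone block, coming from Lemma~\ref{sum.normals} --- times a complementary subgroup $K'$. Reducing $\mathbb C^d$ by the product of the circles yields the product of weighted projective spaces directly, and the residual reduction by $K'$ is checked to be centered using exactly the equality of the $\lambda$'s within each block (the monotonicity you correctly identify as the key). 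The paper emphasizes this point explicitly: ``we are not performing reductions prescribed by polytopes $\widetilde\Delta^1,\widetilde\Delta^2$ \ldots\ These polytopes may not be simple. We are just using the information encoded in these polytopes to divide the reduction prescribed by a simple polytope $\Delta$ into stages.''
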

The idea of the proof is to use the presentation of the corresponding labeled polytope described in Theorem \ref{prva}. The number of monotone polytopes in the presentation will be the number of weighted projective spaces in the Cartesian product.

We use a centered reduction as a tool to prove the existence of a non-displaceable torus fiber in a reduced space.
A Lagrangian submanifold $L$ in a symplectic manifold $\M$ is {\bf non-displaceable} if for every Hamiltonian diffeomorphism $\varphi \colon \M \rightarrow \M$ we have $\varphi(L)\cap L\neq\emptyset.$
In \cite[Corollary 3.4(i)]{AbreuMacarini:2013}, Abreu and Macarini prove that if $\M$ is a symplectic reduction $\M:= \Phi^{-1}(a)/ T^k$ and  $\Phi^{-1}(a)$ contains a non-displaceable $T^k$-invariant set then the image of this non-displaceable set is non-displaceable in the reduced space $\M$. The central fiber of the Cartesian product of weighted projective spaces is non-displaceable (\cite{ChoPoddar:2012}, \cite{Gonzales.Woodward:2012}; see also Section \ref{non-displ}). Therefore we obtain the following result.
\begin{theorem}\label{posled1} 
For every compact symplectic toric orbifold its central fiber is non-displaceable.
\end{theorem}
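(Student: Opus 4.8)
The proof I would give is a short assembly of Theorem~\ref{druga} and the Abreu--Macarini reduction principle. First, apply Theorem~\ref{druga} to present the given compact symplectic toric orbifold $\M$ as a centered reduction $\M=\Phi^{-1}(a)/T^k$ of a Cartesian product $W=\mathbb{CP}(m^1_1,\dots,m^1_{d_1})\times\cdots\times\mathbb{CP}(m^N_1,\dots,m^N_{d_N})$ of weighted projective spaces, where $\Phi$ is the moment map for the Hamiltonian $T^k$-action along which we reduce. Since the reduction produced by Theorem~\ref{druga} is a toric one, $T^k$ is a subtorus of the full torus acting on $W$ and $\M$ carries the induced residual toric action; and since the reduction is \emph{centered}, by definition the level set $\Phi^{-1}(a)$ contains the central fiber $T^W_0=T^1_0\times\cdots\times T^N_0$ of $W$.

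Next I would check the hypotheses of \cite[Corollary~3.4(i)]{AbreuMacarini:2013} for the set $T^W_0\subset\Phi^{-1}(a)$. It is $T^k$-invariant: each factor $T^j_0$ is a single orbit of the residual torus of $\mathbb{CP}(m^j_1,\dots,m^j_{d_j})$, so $T^W_0$ is a single orbit of the full torus on $W$ and is therefore preserved by the subtorus $T^k$. It is non-displaceable in $W$: this is exactly the theorem of Cho--Poddar and Gonzales--Woodward recalled above (see also Section~\ref{non-displ}). Hence \cite[Corollary~3.4(i)]{AbreuMacarini:2013} applies and yields that the image $\overline{T^W_0}$ of $T^W_0$ under the quotient map $\Phi^{-1}(a)\to\M$ is non-displaceable in $\M$.

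The remaining, and only delicate, point is to identify $\overline{T^W_0}$ with the central fiber of $\M$. For this I would trace through the construction underlying Theorems~\ref{prva} and \ref{druga}: the monotone polytopes occurring in the presentation of $\Delta=\mu(\M)$ are all centered at the origin, so the product polytope (which is the moment image of $W$) is centered at the origin of the ambient space, $\mu_W^{-1}(0)=T^W_0$, and the affine slice cutting out $\Delta$ passes through that origin. Chasing definitions then shows that $\overline{T^W_0}=\mu_\M^{-1}(0)$, which, under our standing normalization that $\mu(\M)$ is centered at the origin, is the central fiber of $\M$. I expect essentially all of the genuine work to sit in this last step --- matching up the origin of $\Delta$, the origin of the product polytope, the central fibers of the individual $\mathbb{CP}(m^j)$'s, and the centered reduction level --- whereas the non-displaceability itself comes for free from the two cited results.
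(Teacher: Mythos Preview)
Your proposal is correct and follows essentially the same route as the paper: invoke Theorem~\ref{druga}, apply \cite[Corollary~3.4(i)]{AbreuMacarini:2013} to the central torus of the product, and then identify its image with the central fiber of $\M$ (the paper does this last identification via the explicit moment-map computation in Remark~\ref{centralfiber}, which is exactly the ``chasing definitions'' you describe). One point where the paper is more careful than your write-up: non-displaceability of $T^1_0\times\cdots\times T^N_0$ in the \emph{product} $W$ does not follow formally from non-displaceability of each $T^j_0$ in its factor; the paper invokes Woodward \cite{Woodward:2011} (a K\"unneth-type statement for the relevant Floer theory) for this step, whereas you attribute it directly to Cho--Poddar and Gonzales--Woodward, who treat a single weighted projective space.
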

The above fact is already known (see the works of Fukaya-Oh-Ohta-Ono \cite{fooo:2010}, \cite{fooo:2008} using vanishing of Lagrangian Floer cohomology, and the works of E. Gonzales and C. Woodward \cite{Woodward:2011}, \cite{WilsonWoodward:2012}, \cite{Gonzales.Woodward:2012}, using quasimap Floer cohomology). Our proof of Theorem \ref{posled1} does not involve any calculation in Floer cohomology (though it uses the non-trivial fact that the central torus fiber of a weighted projective space is non-displaceable).

We finish the paper with another application of Theorem \ref{prva}.
Combinatorial analysis of the moment polytopes of symplectic toric manifolds, done in the proof of Theorem \ref{prva}, allows us to give some lower and upper bounds on the Gromov width of these manifolds (see Section \ref{section gromovwidth} for precise definitions and results).

{\bf Organization.} In Section 2, we describe the shrinking procedure. We use this procedure to prove Theorem \ref{prva} in Section 3. Section 4 contains a description of properties of centered reduction and the proof of Theorem \ref{druga}. As a corollary of this Theorem we prove non-displaceability of the central fiber in every compact symplectic toric orbifold. Application of Theorem \ref{prva} to the questions about the Gromov width is described in Section \ref{section gromovwidth}. 

{\bf Acknowledgements.} We would like to thank Professor Miguel Abreu who suggested this problem to us, and to Strom Borman, Felix Schlenk, Kristin Shaw and Benjamin Assarf for helpful discussions.
Moreover we thank the anonymous referees for their comments which have improved an earlier version of this paper.

The authors were supported by the Funda\c{c}\~ao para a Ci\^encia e a Tecnologia (FCT, Portugal):
fellowships SFRH/BD/77639/2011 (Marinkovi\'c) and 
\\SFRH/BPD/87791/2012 (Pabiniak);
projects PTDC/MAT/117762/2010 (Marinkovi\'c and Pabiniak) and 
EXCL/MAT-GEO/0222/2012 (Pabiniak).

\section{Shrinking procedure}\label{section shrinking}

In this section we study labeled polytopes by analyzing their behavior during a procedure of ``shrinking", defined below.
A polytope is called:
\\ $\bullet$ { \bf simple} if there are $n$ edges meeting at each vertex $V$,
 \\ $\bullet$ { \bf rational} if all the edges meeting at a vertex $V$ are of the form $V+t\nu_i,\hskip1mm t\geq0$ where $\nu_i\in  \mathfrak{t}_{\mathbb{Z}}^*$ are primitive integral vectors
 \\ $\bullet$ {\bf smooth} if for each vertex, the corresponding $\nu_i,\hskip1mm i=1,\ldots,n$ form a $\mathbb{Z}-$ basis of $ \mathfrak{t}_{\mathbb{Z}}^*$. 
 \\Convex polytopes satisfying the above three conditions are called {\bf Delzant polytopes}. These polytopes classify symplectic toric manifolds in the following sense.
 Moment map image of a symplectic manifolds equipped with a toric $T$ action gives a Delzant polytope in  $ \mathfrak{t}^*$. Two such manifolds, $\M_1$ and $\M_2$, are called {\bf equivariantly symplectomorphic} if there exists a symplectomorphism $\phi \colon \M_1 \rightarrow \M_2$ such that $\phi( t \cdot x)=t \cdot (\phi(x))$ for all $t \in T$ and all $x \in \M_1$. The moment map gives a bijection between symplectic manifolds with a toric $T$ action, up to equivariant symplectomorphism, and the set of Delzant polytopes in  $ \mathfrak{t}^*$, up to translation (\cite{Delzant:1988}).
To simplify the notation we will now fix an identification of $\mathfrak{t}^*$ with $\bb{R}^n$ and work in $\bb{R}^n$, $n=\dim T$.
\\{\bf Conventions:} Fix any splitting, $T\cong (S^1)^n$, of the torus acting and let $S^1=\bb{R}/2 \pi \bb{Z}$, i.e. the exponential map $\exp \colon Lie(S^1)\cong \bb{R} \rightarrow S^1$
is given by $t \mapsto e^{i t}$.
Choosing a different splitting of the torus $T$ into a product of circles corresponds to applying a $\pm SL(n,\bb{Z})$ transformation to $\bb{R}^n$. Changing the convention to $S^1 \cong \bb{R}/  \bb{Z}$ would result in rescaling by a factor of ${2 \pi}$. 
With the identification $\mathfrak{t}^* \cong \bb{R}^n$ fixed we can now assign to each compact symplectic toric orbifold $\M$
a labeled, rational and simple polytope $\Delta \subset \bb{R}^n$ being the moment map image. If $\M$ is smooth then $\Delta$ is also smooth. As a moment map is unique only up to translation, this polytope is also unique only up to translation.
We later specify a choice of moment map, making the associated polytope unique. See Remark \ref{choose mm}.

Let
\begin{equation}\label{polytope}
  \Delta=\bigcap_{i=1}^d\{x\in\mathbb{R}^n |\langle x,v_i\rangle\leq\lambda_i\}
\end{equation}
be a labeled $n$-dimensional polytope with $d$ facets.
That is, each vector $v_i$, $i=1,\ldots, d$ does not need to be primitive but may be an $a_i$ multiple of a primitive vector $w_i$, for some 
positive integer $a_i$.

 Following the ideas in \cite{Gonzales.Woodward:2012} and \cite{Reid:1983} we define a purely combinatorial procedure of shrinking a convex polytope. The idea is to shrink the polytope continuously by moving each facet inward with the same speed, that is by reducing each $\lambda_i$ by $t$ at the time $t$ till we get to one point. It might happen after some time of shrinking that some half-spaces intersect only in a lower dimensional $\bb{R}^k$ and if we move them a tiny bit more we obtain an empty set of intersections. If this happens we stop moving these facets and carry on moving the remaining ones. Below is the precise definition. For any $t>0$ let
$$H_i^t:=\{x \in \bb{R}^n\,|\, \langle x,v_i\rangle=\lambda_i-t\}, \,\,\,\,h_i^t:=\{x \in \bb{R}^n\,|\, \langle x,v_i\rangle \leq\lambda_i-t\}.$$
For $t>0$ small enough the set
$$\Delta^t:= \{x \in \bb{R}^n\,|\, \langle x,v_i\rangle \leq\lambda_i-t;\,i=1,\ldots,d\}$$
is an $n$-dimensional polytope. Let $t_{1}$ be the smallest $t$ for which $\dim \Delta^{t}=n-k_1 < n$. If $k_1=n$ then $\Delta^{t_{1}}$ is a point and we stop the shrinking prodecure. Suppose $k_1< n$. That means that there exists an orthogonal  splitting $\bb{R}^n=\bb{R}^{k_1}\times \bb{R}^{n-k_1}$ and a point $p\in\mathbb{R}^n$ such that $ \Delta^{t_{1}} \subset p+\{0\} \times  \bb{R}^{n-k_1}$. We can take $p$ such that $p+\{0\}\times B^{n-k_1}(\delta) \subset \Delta^{t_{1}}$ for some small $\delta >0$.
  Define
$$\mathcal{D}_1:=\{i \in \{1,\ldots,d\}\,|\,  \Delta^{t_{1}} \subset H_i^{t_{1}}\}.$$
 \begin{lem} $\mathcal{D}_1$ is a non-empty set and for every $i \in \mathcal{D}_1$,\hskip1mm $v_i \in \mathbb{R}^{k_1}\times\{0\}$.
 \end{lem}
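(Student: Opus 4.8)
The plan is a short, direct polyhedral argument. First I would introduce the orthogonal projection $\pi\colon\bb{R}^n=\bb{R}^{k_1}\times\bb{R}^{n-k_1}\to\bb{R}^{n-k_1}$ onto the second factor and set $A:=p+(\{0\}\times\bb{R}^{n-k_1})$, the affine subspace containing $\Delta^{t_{1}}$. Since $\dim A=n-k_1=\dim\Delta^{t_{1}}$, the affine hull of $\Delta^{t_{1}}$ is exactly $A$; and since $p+(\{0\}\times B^{n-k_1}(\delta))\subset\Delta^{t_{1}}$ is a relative neighbourhood of $p$ inside $A$, the point $p$ lies in the relative interior of $\Delta^{t_{1}}$. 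These two observations are really all that is needed; everything else is routine.

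Next I would prove $\mathcal{D}_1\neq\emptyset$. For each $i\notin\mathcal{D}_1$ the set $\Delta^{t_{1}}\cap H_i^{t_{1}}$ is a proper face of $\Delta^{t_{1}}$ (a face because $\Delta^{t_{1}}\subset h_i^{t_{1}}$, and a \emph{proper} one because $\Delta^{t_{1}}\not\subset H_i^{t_{1}}$), hence it does not meet the relative interior of $\Delta^{t_{1}}$; in particular $\langle p,v_i\rangle<\lambda_i-t_{1}$. If $\mathcal{D}_1$ were empty, this strict inequality would hold for every $i\in\{1,\ldots,d\}$, so $\{x\in\bb{R}^n\,|\,\langle x,v_i\rangle<\lambda_i-t_{1},\ i=1,\ldots,d\}$ would be a non-empty open subset of $\bb{R}^n$ contained in $\Delta^{t_{1}}$, forcing $\dim\Delta^{t_{1}}=n$ and contradicting $\dim\Delta^{t_{1}}=n-k_1<n$. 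Hence $\mathcal{D}_1\neq\emptyset$.

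Finally, fix $i\in\mathcal{D}_1$. For every $y\in B^{n-k_1}(\delta)$ the point $p+(0,y)$ lies in $\Delta^{t_{1}}\subset H_i^{t_{1}}$, so $\langle p+(0,y),v_i\rangle=\lambda_i-t_{1}$; subtracting the value at $y=0$ and using the orthogonal splitting gives $\langle (0,y),v_i\rangle=\langle y,\pi(v_i)\rangle=0$ for all $y$ with $|y|<\delta$, whence $\pi(v_i)=0$, i.e. $v_i\in\bb{R}^{k_1}\times\{0\}$. There is no genuine obstacle in this argument; the only point requiring a little care is the identification of $A$ with the affine hull of $\Delta^{t_{1}}$ and of $p$ with a relative interior point, which is exactly what the hypothesis $p+(\{0\}\times B^{n-k_1}(\delta))\subset\Delta^{t_{1}}$ together with the dimension count $\dim\Delta^{t_{1}}=n-k_1$ supplies.
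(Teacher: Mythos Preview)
Your proof is correct and follows essentially the same approach as the paper. The only cosmetic differences are that for non-emptiness you argue via a full-dimensional open set in $\Delta^{t_1}$ while the paper moves $p$ by a small vector in $\bb{R}^{k_1}\times\{0\}$, and for the second claim you invoke $\Delta^{t_1}\subset H_i^{t_1}$ directly (so equalities $\langle p+(0,y),v_i\rangle=\lambda_i-t_1$) whereas the paper uses $p\pm w\in h_i^{t_1}$ to squeeze the inequality; both routes are straightforward and equivalent.
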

 \begin{proof} If $\Delta^{t_{1}}$ is a point, i.e. if $k_1=n$ the claim is obvious. Assume that $k_1<n$. Note that then there exists $i$ such that $\langle p,v_i \rangle =\lambda_i-t_{1}$, i.e. $p \in H_i^{t_{1}}$. Indeed, suppose not and take any
$0 \neq v \in \bb{R}^{k_1}\times\{0\}$ and any $i \in\{1,\ldots,d\}$. As $p  \notin H_i^{t_{1}}$ but  $p  \in h_i^{t_{1}}$ there exists some $\varepsilon_i>0$ such that $p+\varepsilon_i v \in h_i^{t_{1}}$. Let $\varepsilon:=\min\{\varepsilon_1,\ldots, \varepsilon_d\}$. Then $$p+ \varepsilon v \in \Delta^{t_{1}}\subset p+ \{0\} \times \bb{R}^{n-k_1}$$ though $p+ \varepsilon v  \notin p+ \{0\} \times \bb{R}^{n-k_1}$. Contradiction.
\\Take any $i$ such that $p \in H_i^{t_1}$ and any $w \in \{0\}\times B^{n-k_1}(\delta)$.  Then $p+w,p-w \in  \Delta^{t_1} \subset  h_i^{t_1}$, so
$\langle p+w,v_i\rangle $, $ \langle p-w,v_i\rangle \leq\lambda_i-t_{1}$. This implies $\langle w,v_i\rangle $, $ \langle -w,v_i\rangle \leq\lambda_i-t_{1}-\langle p,v_i\rangle=0$ and therefore $\langle w,v_i\rangle =0$ for any $w \in \{0\}\times B^{n-k_1}(\delta)$. We conclude that $v_i \in \mathbb{R}^{k_1}\times\{0\}$ and thus for any $q \in \Delta^{t_1} \subset  \{0\} \times  \bb{R}^{n-k_1}$ we have $\langle q,v_i\rangle = \langle p,v_i\rangle =\lambda_i-t_{1}$. This proves that  $\Delta^{t_1} \subset H_i^{t_1}$.
\end{proof}
From the moment $t_{1}$ we shrink only the facets $f_i$ with $i \in \{1,\ldots,d\} \setminus \mathcal{D}_1$
 and proceed similarly. Let $t_{2}< \cdots <t_{{M+1}}$ be the times when dimension drops while shrinking and let $k_j$ denote the value by which dimension drops at a time $t_{j}$. At the time $t_{M+1}$ our polytope has shrunk to a single point. We define non-empty sets
 $$\mathcal{D}_s:=\{i \in \{1,\ldots,d\} \setminus (\mathcal{D}_1 \cup \ldots \cup \mathcal{D}_{s-1})\,|\, \Delta^{t_{s}} \subset H_i^{t_{s}}\}, s=1,\ldots M+1.$$
Therefore for $t \in (t_{j},t_{j+1}]$ we have
\begin{align*} \Delta^t:= \{x \in \bb{R}^n\,|\, & \langle x,v_i\rangle \leq\lambda_i-t,\,\,\,i \in \{1,\ldots,d\}\setminus (\mathcal{D}_1 \cup \ldots \cup \mathcal{D}_j),\\
&
\langle x,v_i\rangle =\lambda_i-t_{s}\,\,\,i \in \mathcal{D}_s,\,s=1,\ldots,j \}.\end{align*}

Hence, $\dim \Delta^{t}=n-(k_1+\ldots +k_j)$  for some $k_1,\ldots,k_M \geq 1$, and $k_{M+1}=n-(k_1+\ldots +k_M)\geq 1$.
\begin{definition} A labeled polytope is called {\bf centered at the origin} if a single point obtained at the end of the shrinking procedure is the origin.
\end{definition}
By changing the labels of a centered polytope we may obtain a polytope that is not centered (see Figure \ref{figurezeroexample}).
\begin{figure}
\centering
\includegraphics[width=8.5cm]{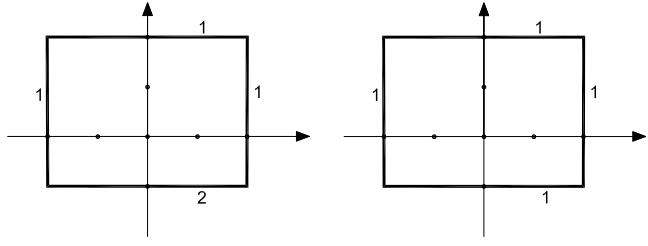}
\caption{Two labeled polytopes that differ only by labels: monotone and centered (on the left), neither monotone nor centered (on the right). }
\label{figurezeroexample}
\end{figure}
Note that, if $\Delta$ is centered polytope, then for every $j\in\{1,\ldots,M+1\}$ we have
$$\mathcal{D}_j=\{i\in\{1,\ldots,d\}|\, \lambda_i=t_{j}\}.$$
\begin{remark}\label{choose mm}
Every labeled polytope can be translated by a vector to a centered position. This does not change the corresponding orbifold, as the moment map is defined up to translation.
Therefore from now on we always choose the moment map $\mu$ for which $\mu(\M)=\Delta$ is centered at the origin. 
With this assumption the central fiber is the preimage (under the moment map) of $0$. 
\end{remark}

There exists an orthogonal splitting $\bb{R}^n=\bb{R}^{k_1}\times \ldots \times \bb{R}^{k_{M+1}}$ such that for any $j=1,\ldots, M+1$ the polytope $\Delta^{t_{j}} \subset \{0\}\times \bb{R}^{k_{j+1}}\times \ldots \times\bb{R}^{k_{M+1}}$.
Let 
     \begin{align}\label{projections}
      \pi_j &\colon \bb{R}^n \rightarrow  \bb{R}^{k_{j+1}}\times \ldots \times\bb{R}^{k_{M+1}}=\bb{R}^{n-(k_1+\ldots +k_j)},\nonumber \\ 
\pi_j^{\perp} &\colon \bb{R}^n \rightarrow  \bb{R}^{k_{1}}\times \ldots \times\bb{R}^{k_{j}}
     \end{align}
be orthogonal projections. Then, $\pi_j(v_i)=0,$ for every $i\in\mathcal{D}_1\cup\cdots\cup\mathcal{D}_j$. We view $\Delta^{t_{j}}$ as a full dimensional polytope in $\bb{R}^{n-(k_1+\ldots +k_j)}$ with normals $\pi_j(v_i)$ for some set of $i$'s in $ \{1,\ldots,d\} \setminus (\mathcal{D}_1 \cup \ldots \cup \mathcal{D}_{j})\}$. Note that even if $v_i$ was primitive $\pi_j(v_i)$ does not need to be. This means that during the shrinking procedure the labels may change at the time when the polytope's dimension drops and this can happen only then. In particular, if the original polytope $\Delta$ has a trivial labeling, then $\Delta^t$ may not have a trivial labeling.

\subsection{Examples}
Below we give some explicit examples of the shrinking procedure. During this procedure two types of events can occur and change the combinatorial type of the polytope. The first one is the dimension drop already described above. The other event is what could be called ``disappearing'' of facets. It occurs when the inequality $\langle x,v_i  \rangle \leq\lambda_i-t$ for some $i$ becomes superfluous, that is, it is already implied by other inequalities defining $\Delta^t$. Note that the combinatorial type of a monotone polytope does not change during the shrinking porcedure, until we shrink to the point.

\begin{example}\label{newfirstexample} Here is an example of a simple polytope with a trivial labeling, having only one dimension drop (Figure \ref{figure newfirstexample}).
\begin{align*} \Delta = \{x\in\mathbb{R}^2|&\langle x,(0,\pm1)\rangle\leq3,\langle x,(-1,0)\rangle\leq3,\langle x,(1,\pm1)\rangle\leq3,
\langle x,(2,1)\rangle\leq4\}.\end{align*}
 At the time $t=2$ inequality $\langle x,(2,1)\rangle\leq4-t$ becomes superfluous and the face structure of polytope $\Delta^t$ is changing. We obtain a monotone polytope $\Delta^2$. The original polytope shrinks to zero in the time $t_1=3.$
 \begin{figure}\centering 
\includegraphics[width=5cm]{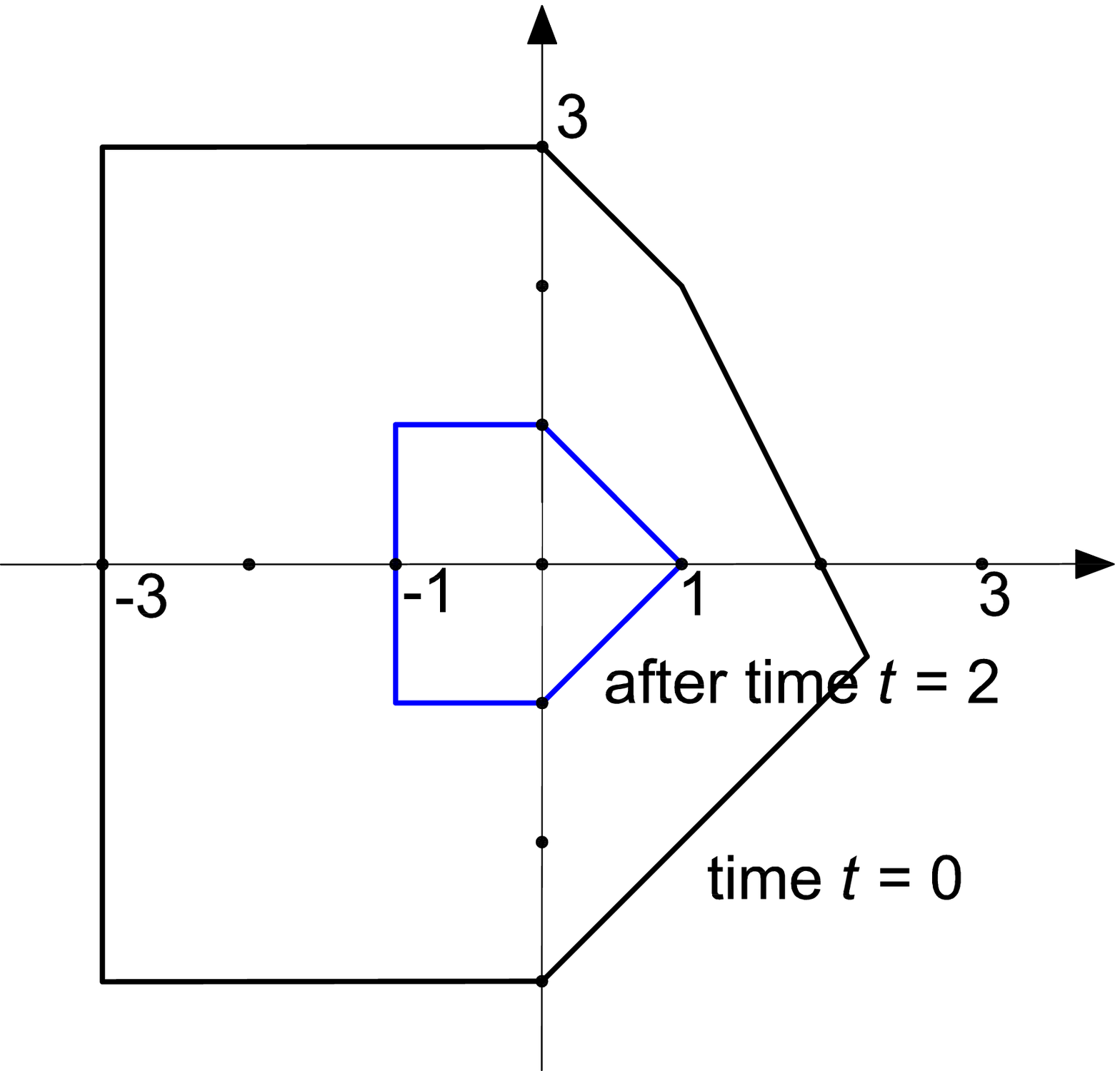}
\caption{Example \ref{newfirstexample}}
\label{figure newfirstexample}
\end{figure}
\end{example}

\begin{example}\label{verynewfirstexample} Here is an example of a simple polytope with a trivial labeling, having two dimension drops (Figure \ref{figure verynewfirstexample}).
\begin{align*} \Delta = \{x\in\mathbb{R}^3|&\langle x,(0,0,\pm1)\rangle\leq1,\langle x,(\pm1,0,1)\rangle\leq2,\langle x,(0,\pm1,1)\rangle\leq2\}.\end{align*}
At the time $t_1=1$ the polytope's dimension drops by 1 and we obtain a monotone polytope. The original polytope shrinks to zero in the time $t_2=2.$
\begin{figure}\centering 
\includegraphics[width=10cm]{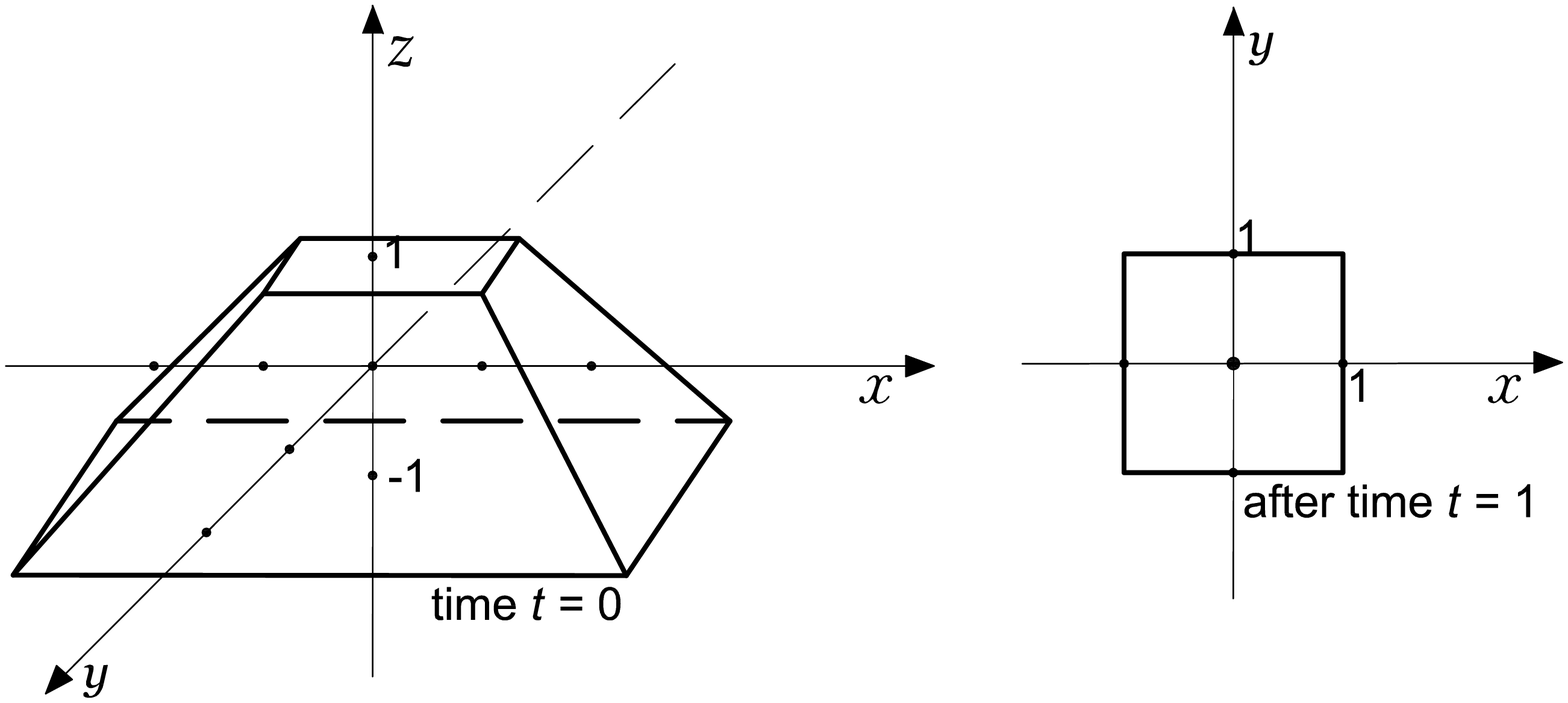}
\caption{Example \ref{verynewfirstexample}}
\label{figure verynewfirstexample}
\end{figure}
\end{example}
\begin{example}\label{firstexample} This example shows that the order in which facets disappear does not imply any inequalities between the corresponding coefficients $\lambda$'s.
\begin{align*} \Delta =\{x\in\mathbb{R}^2|&\langle x,\pm(1,0)\rangle\leq4,\langle x,\pm(0,1)\rangle\leq4,\langle x,(1,1)\rangle\leq6,
\langle x,(-1,1)\rangle\leq7,\\
&\langle x,(-2,-3)\rangle\leq12\}.\end{align*}
    At the time $t=1$ inequality $\langle x,(-1,1)\rangle\leq7-t$ becomes superfluous. At the time $t=2$ inequalities $\langle x,(1,1)\rangle\leq6-t$ and $\langle x,(-2,-3)\rangle\leq12-t$  become superfluous and we obtain a monotone polytope. The original polytope shrinks to zero in time $t_1=4.$ See Figure \ref{figure firstexample}.
 \begin{figure}\centering 
\includegraphics[width=6cm]{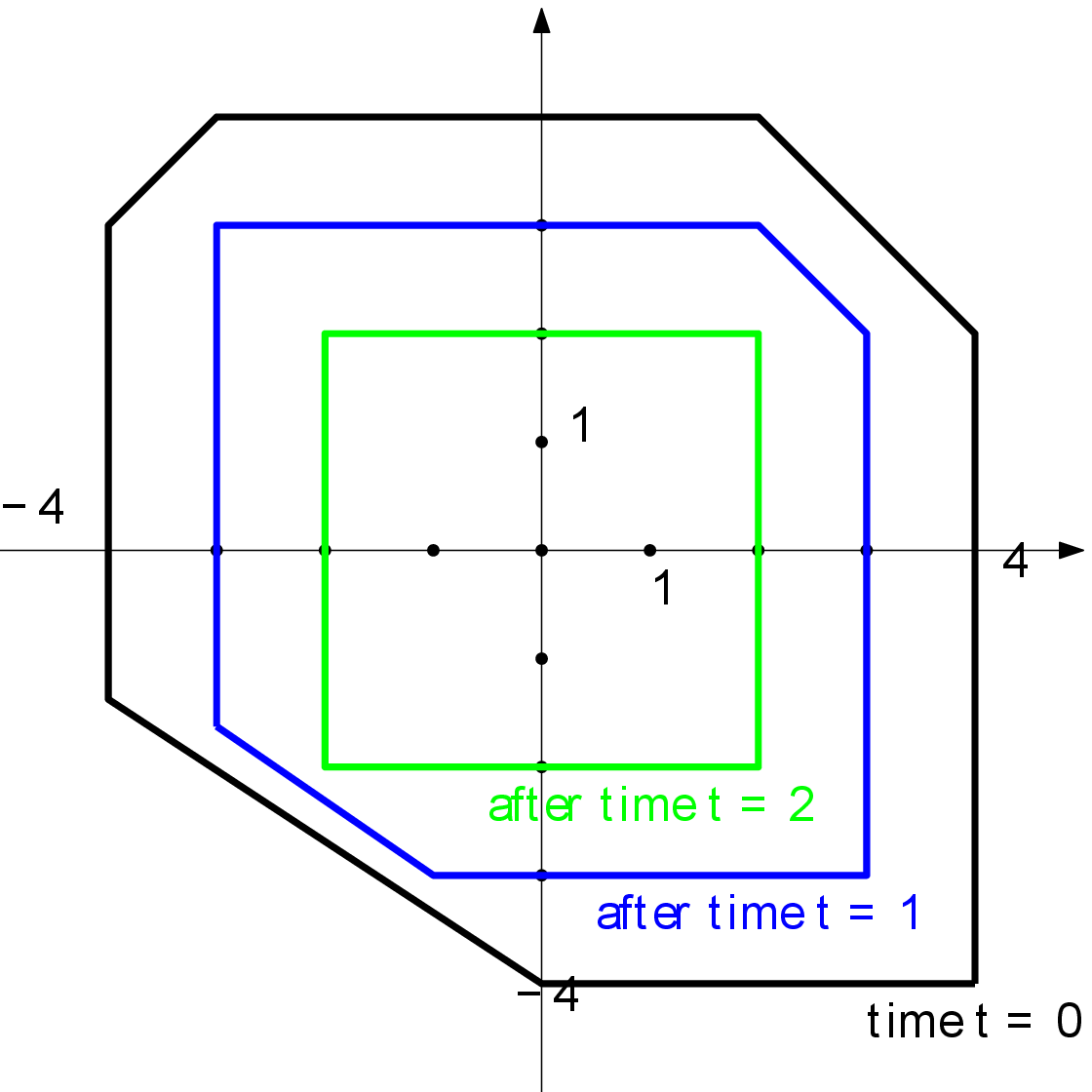}
\caption{Example \ref{firstexample}}
\label{figure firstexample}
\end{figure}
 \end{example}
 \begin{example}\label{secondexample} The following example shows that between two dimension drops some facets may ``disappear'' as well.
 \begin{align*} \Delta =\{x\in\mathbb{R}^2|&\langle x,\pm(1,0,0)\rangle\leq4,\langle x,\pm(0,1,0)\rangle\leq3,\langle x,(1,1,0)\rangle\leq5,
\langle x,\pm(0,0,1)\rangle\leq1\}\end{align*}
 At the time $t_1=1$ the dimension drops by 1. At the time $t=2$ one facet disappears, and at the time $t_2=3$ the dimension drops again and we obtain a monotone polytope. The original polytope shrinks to zero in the time $t_3=4.$ See Figure \ref{figure secondexample}.
 \begin{figure}\centering 
\includegraphics[width=11cm]{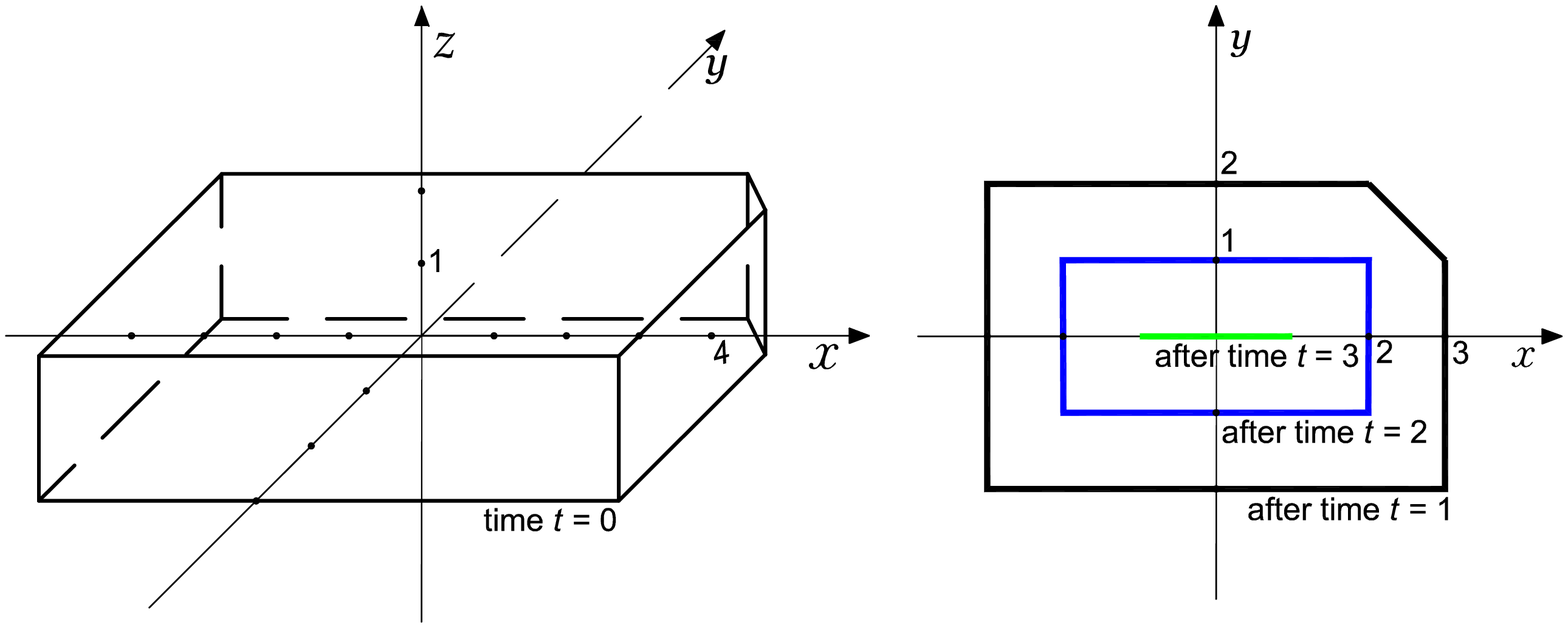}
\caption{Example \ref{secondexample}}
\label{figure secondexample}
\end{figure}
 \end{example}
The labeling of facets may change but only at the moments of a dimension drop. The reason is that at that moment the facet normals change from $w$ to $\pi(w)$, where $\pi$ is some orthogonal projection. Even if $w$ was primitive, $\pi(w)$ does not need to be, and therefore the label on the corresponding facet may change. Due to this change of labeling some facets ``move'' slower during the shrinking than expected. As a result, a facet that may seem to disappear at the moment of a dimension drop in fact carries a relevant information. This is why in the shrinking procedure we keep track of all the inequalities. Here is an example of this type of a situation.
\begin{example}\label{thirdexample} Consider a simple, rational, but not smooth polytope 
 \begin{align*} \Delta =\{x\in\mathbb{R}^2|&\langle x,\pm(0,1)\rangle\leq2,\langle x,(1,1)\rangle\leq6,\langle x,(2,-1)\rangle\leq8, 
\langle x,(-1,0)\rangle\leq6\}.\end{align*}
Note that at the time $t=1$ one facet seems to ``disappear''  as the inequality $\langle x,(1,1)\rangle\leq6-t$ is implied by other inequalities. However, the information encoded in that facet will become relevant after the time $t=4$.
 At the time $t=2$ the dimension drops by 1. The resulting one-dimensional polytope is given by the half spaces
$$h^2_3=\{x\in\mathbb{R}|\langle x,1\rangle\leq4\},
  h^2_4=\{x\in\mathbb{R}|\langle x,2\rangle\leq6\},
   h^2_5=\{x\in\mathbb{R}|\langle x,-1\rangle\leq4\}.$$
It is smooth and has a non-trivial labeling. At the time $t=4$ hyperplanes $H^4_3$ and $H^4_4$ meet and then, from the time $t=4$ the inequality $\langle x,2\rangle\leq 8-t$ is implied by the inequality $\langle x,1\rangle\leq6-t$ for every $t\geq4.$ That is, the facet $f_4$ disappears at the time $t=4$ and we obtain a monotone polytope defined by facets with a trivial labeling. The original polytope shrinks to zero in the time $t_2=6.$ See Figure \ref{figure3}.
 \begin{figure}\centering 
\includegraphics[width=11cm]{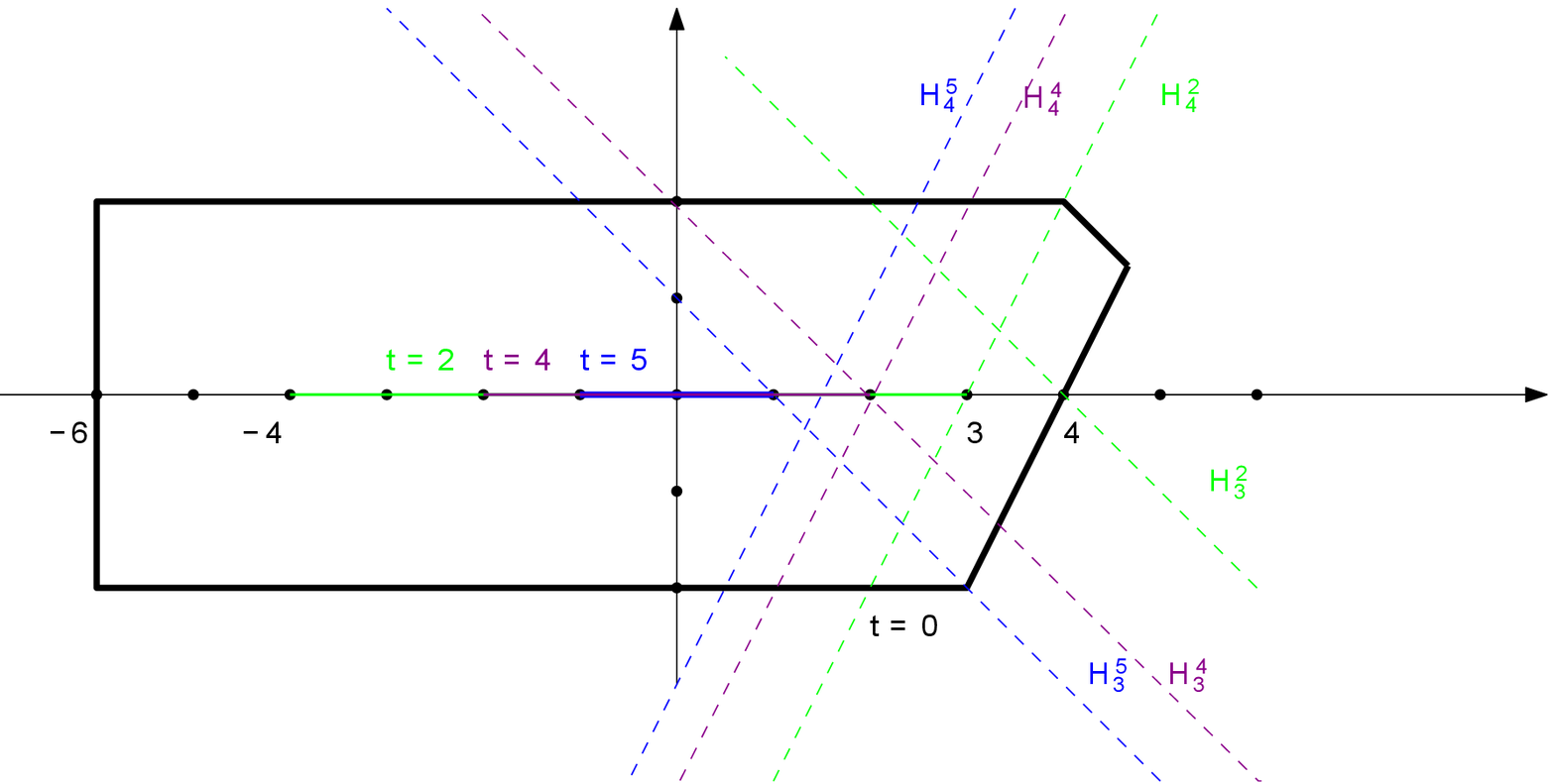}
\caption{Example \ref{thirdexample}.}\label{figure3}
\end{figure}
\end{example}
\section{Proof of Theorem \ref{prva}.} \label{section main proof}
In this section we prove Theorem \ref{prva}. We start by analyzing changes in the facet structure of $\Delta$ that happen during the shrinking procedure and use this information to define the polytopes appearing in Theorem \ref{prva}. Then we show that the sets defined in this way are indeed compact polytopes. At the last step we prove the equality of the sets claimed in Theorem \ref{prva}.

Let $M+1\geq 1$ be the number of dimension drops. That is, there are times $t_{1}<\cdots< t_{M}< t_{M+1}$ when the dimension drops by $k_1,\ldots,k_M, k_{M+1}\geq 1$ respectively, and $k_1+\cdots +k_M + k_{M+1}=n$. At the time $t_{M+1}$ we get a degenerate polytope $\Delta^{t_{M+1}}=\{0\}$.

\begin{lem}\label{small.lambda} For each  $j\in\{1,\ldots,M+1\}$ and $k\in\{1,\ldots,d\}\backslash(\mathcal{D}_1\cup\cdots\cup\mathcal{D}_{M+1})$ we have $t_{j}<\lambda_k$.
\end{lem}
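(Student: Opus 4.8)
The plan is to show that any facet index $k$ that is never ``collapsed'' by the shrinking — i.e. $k\notin\mathcal D_1\cup\cdots\cup\mathcal D_{M+1}$ — must have $\lambda_k$ strictly larger than every drop time $t_j$, in particular larger than the final time $t_{M+1}$. First I would unwind the definitions: since $\Delta$ is centered at the origin, the remark after Figure \ref{figurezeroexample} gives $\mathcal D_j=\{i\mid\lambda_i=t_j\}$ for each $j$, so the hypothesis $k\notin\bigcup_{j=1}^{M+1}\mathcal D_j$ says precisely that $\lambda_k\notin\{t_1,\dots,t_{M+1}\}$. Thus it suffices to rule out $\lambda_k<t_j$ for the relevant $j$; combined with $\lambda_k\neq t_j$ this yields $t_j<\lambda_k$, and since the $t_j$ are increasing it is enough to get the inequality for $j=M+1$, or equivalently to show $\lambda_k$ cannot be $\le t_{M+1}$ for an uncollapsed facet.

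The key geometric point is monotonicity of the half-space $h_k^t$ in $t$: if $\lambda_k<t$ then $h_k^t=\{x\mid\langle x,v_k\rangle\le\lambda_k-t\}$ does not contain the origin (since $\lambda_k-t<0\le\langle 0,v_k\rangle$... wait, $\langle 0,v_k\rangle=0$), so in fact $0\notin h_k^t$ once $t>\lambda_k$. But the shrinking procedure, applied to a centered polytope, keeps the origin inside (it is the final point $\Delta^{t_{M+1}}=\{0\}$, and $0\in\Delta^t$ for all $t\le t_{M+1}$ because $0$ lies in every $h_i^{t_{M+1}}$, hence in every $h_i^t$ for $t\le t_{M+1}$). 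Concretely: for every $t\le t_{M+1}$ and every $i$, we have $0\in\Delta^{t}$, so $\langle 0,v_i\rangle=0\le\lambda_i-t$, i.e. $t\le\lambda_i$. Taking $t=t_{M+1}$ gives $t_{M+1}\le\lambda_i$ for all $i$, in particular $t_{M+1}\le\lambda_k$. Since $k$ is uncollapsed, $\lambda_k\neq t_{M+1}$, so $t_{M+1}<\lambda_k$, and then $t_j\le t_{M+1}<\lambda_k$ for every $j\in\{1,\dots,M+1\}$.

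One subtlety to address carefully is that for an index $i\in\mathcal D_s$ the defining inequality becomes an equality $\langle x,v_i\rangle=\lambda_i-t_s$ from time $t_s$ onward, so one should phrase the argument using $0\in\Delta^{t}$ before any relevant drop, or simply observe that in all cases $0$ satisfies $\langle 0,v_i\rangle=0$ and the constraint active at the final time forces $\lambda_i-t_{M+1}\ge 0$ (for $i$ collapsed at time $t_s$, the equality $0=\lambda_i-t_s$ gives $\lambda_i=t_s\le t_{M+1}$, which is consistent but irrelevant since such $i$ are excluded). The main obstacle is thus not the inequality itself but making sure the bookkeeping of which constraints are ``equalities'' versus ``inequalities'' at time $t_{M+1}$ is stated cleanly; once that is set up, the conclusion $t_j<\lambda_k$ is immediate from $0\in\Delta^{t_{M+1}}$ together with $\lambda_k\notin\{t_1,\dots,t_{M+1}\}$.
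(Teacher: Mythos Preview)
Your proposal is correct and follows essentially the same idea as the paper's proof: both arguments rest on the observation that $0\in\Delta^{t}$ forces $\langle 0,v_k\rangle=0\le\lambda_k-t$, hence $t\le\lambda_k$, and then strictness comes from $k\notin\mathcal D_j$, i.e.\ $\lambda_k\neq t_j$. The only cosmetic difference is that the paper argues by contradiction at an arbitrary $t_j$ (if $\lambda_k<t_j$ then $0\notin h_k^{t_j}$, so $0\notin\Delta^{t_j}$, contradicting centeredness), whereas you reduce first to $j=M+1$ via the monotonicity $t_1<\cdots<t_{M+1}$ and then read off $t_{M+1}\le\lambda_k$ directly from $0\in\Delta^{t_{M+1}}=\{0\}$.
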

\begin{proof}
Assume that for some $j\in\{1,\ldots,M+1\}$ and some $k\in\{1,\ldots,d\}\backslash(\mathcal{D}_1\cup\cdots\cup\mathcal{D}_{M+1})$ we have
 $\lambda_k\leq t_{j}.$ As $k \notin \mathcal{D}_{j}$ we have $\lambda_k \neq t_{j},$ so $\lambda_k < t_{j}.$
That means that during the shrinking procedure the hyperpalne $H_k$ meets the origin before a time $t_{j}$. Therefore $\{0\} \notin h_k^{t_{j}}$ and thus $\{0\} \notin \Delta^{t_{j}}$. This contradicts the assumption that $\Delta$ is centered.
 \end{proof}
Let $N\geq0$ be the number of different $\lambda_i$'s with $i \in \{1,\ldots,d\}\backslash(\mathcal{D}_1\cup\cdots\cup\mathcal{D}_{M+1})$, that is with $\lambda_i >t_{M+1}$. Assume that $\lambda_{j_1}<\cdots<\lambda_{j_N}$. We put the indices of these facets into groups
$$\mathrm{I}_k=\{i\in\{1,\ldots,d\}|\hskip1mm \lambda_i=\lambda_{j_k}\},\hskip2mm k=1,\ldots,N, \hskip2mm\mathrm{I}_0=\emptyset.$$

 Recall the notation from Section \ref{section shrinking}, $$\mathcal{D}_j=\{i\in\{1,\ldots,d\}|\,\lambda_i=t_{j}\},\hskip2mm j=1,...,M+1.$$
According to Lemma \ref{small.lambda} we have $t_{1}<\cdots<t_{M+1}<\lambda_{j_1}<\cdots<\lambda_{j_N}$
so all the sets $\mathrm{I}_k$ and $\mathcal{D}_j$ are disjoint.
Recall also the projections $\pi_j^\perp$ from equation \eqref{projections}.
\begin{definition}\label{definition polytopes}
We define
\begin{equation}\label{def.pol}
\begin{aligned}
 \widetilde{\Delta}^{n}_{0}&=\bigcap_{i\in\mathcal{D}_1\cup\cdots\cup\mathrm{D}_{M+1}}\{x\in\mathbb{R}^{n}|\hskip1mm\langle x,v_i\rangle\leq t_{M+1}\},\\
\widetilde{\Delta}^{n}_{k}&=\bigcap_{i\in\mathcal{D}_1\cup\cdots\cup\mathrm{D}_{M+1}\cup\mathrm{I}_k}\{x\in\mathbb{R}^{n}|\hskip1mm\langle x,v_i\rangle\leq\lambda_{j_k}\},\\
\widetilde{\Delta}^{k_1+\cdots+ k_j}&=\bigcap_{i\in\mathcal{D}_1\cup\cdots\cup\mathcal{D}_j}\{y\in\mathbb{R}^{(k_1+\cdots+k_j)}|\hskip1mm\langle y,\pi_j^\bot(v_i)\rangle\leq t_{j}\}\end{aligned}
\end{equation}
for $k=1,\ldots,N$ and $j=1,\ldots,M.$
\end{definition}
All the polytopes defined above are decorated by  \hskip2.5mm$\widetilde{}\hskip1mm$ to distinguish them from the polytopes $\Delta^t$ appearing in the shrinking procedure. The upper index indicates the dimension of the polytope. Note that if $\Delta$ has a trivial labeling, then every polytope in \ref{def.pol} has a trivial labeling.

The above sets are convex as being intersections of half spaces. However, it is not immediately clear if they are compact.

\begin{lem} \label{lemma djcompact}
 For any $j=1,\ldots, M+1$ and any $a >0$ the polytope
$$ P=P_j(a):=\{x \in \bb{R}^{k_1+\ldots+k_j}\,|\, \langle x, \pi_j^{\perp}(v_i)\rangle <a,\, i \in \mathcal{D}_1 \cup \ldots \cup \mathcal{D}_j \}$$ is compact.
\end{lem}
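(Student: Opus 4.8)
The plan is to reduce the statement to the linear-algebraic fact that the vectors $\{\pi_j^\perp(v_i):i\in\mathcal{D}_1\cup\cdots\cup\mathcal{D}_j\}$ \emph{positively span} $\bb{R}^{m}$, $m:=k_1+\cdots+k_j$, i.e. that every vector of $\bb{R}^{m}$ is a non-negative linear combination of them. Granting this, $P$ is compact: it is closed, being a finite intersection of (closed) half-spaces, and it is bounded, because if $w_1,\dots,w_r$ positively span $\bb{R}^{m}$ then a sequence $x_n\in\{x:\langle x,w_l\rangle\le a,\ l=1,\dots,r\}$ with $\|x_n\|\to\infty$ would, after passing to a subsequence and normalising, yield a unit vector $e$ with $\langle e,w_l\rangle\le 0$ for every $l$, contradicting positive spanning. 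Throughout I identify $\bb{R}^{m}$ with $\bb{R}^{m}\times\{0\}\subseteq\bb{R}^n$; by the analysis in Section~\ref{section shrinking}, every $v_i$ with $i\in\mathcal{D}_s$ lies in $\bb{R}^{k_1+\cdots+k_s}\times\{0\}\subseteq\bb{R}^{m}\times\{0\}$, so that $\pi_j^\perp(v_i)=v_i$ for all $i\in\mathcal{D}_1\cup\cdots\cup\mathcal{D}_j$.

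I would prove the positive-spanning assertion by induction on $j$. Set $m':=k_1+\cdots+k_{j-1}$ and $U:=\{0\}^{m'}\times\bb{R}^{k_j}\times\{0\}$, so that $\bb{R}^{m}\times\{0\}=(\bb{R}^{m'}\times\{0\})\oplus U$, and write $u=u'+u''$ for the corresponding decomposition of a vector $u$. Suppose, for contradiction, that some $u\ne 0$ in $\bb{R}^{m}\times\{0\}$ satisfies $\langle u,v_i\rangle\le 0$ for all $i\in\mathcal{D}_1\cup\cdots\cup\mathcal{D}_j$. If $u'\ne 0$, then, since $\{v_i:i\in\mathcal{D}_1\cup\cdots\cup\mathcal{D}_{j-1}\}$ positively spans $\bb{R}^{m'}$ by the inductive hypothesis, there is $i\in\mathcal{D}_1\cup\cdots\cup\mathcal{D}_{j-1}$ with $\langle u',v_i\rangle>0$; as such a $v_i\in\bb{R}^{m'}\times\{0\}$ is orthogonal to $U\ni u''$, this gives $\langle u,v_i\rangle=\langle u',v_i\rangle>0$, a contradiction. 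Hence $u'=0$ and $u=u''\in U\setminus\{0\}$. (For $j=1$ this first alternative is vacuous, and one passes directly to the next step.)

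Now choose a point $q$ in the relative interior of the polytope $\Delta^{t_j}$, which is non-empty since $\Delta^{t_j}$ contains the origin ($\Delta$ being centered). Recall that $\Delta^{t_j}\subseteq\{0\}^{m}\times\bb{R}^{n-m}$; that $\langle q,v_i\rangle=\lambda_i-t_s=0$ for $i\in\mathcal{D}_s$, $s\le j$ (because $\Delta^{t_j}\subseteq H_i^{t_s}$ and $\lambda_i=t_s$); and that $\langle q,v_i\rangle<\lambda_i-t_j$ for $i\notin\mathcal{D}_1\cup\cdots\cup\mathcal{D}_j$ (the corresponding facet inequalities are not tight on all of $\Delta^{t_j}$, otherwise $i$ would lie in $\mathcal{D}_j$, and $\lambda_i>t_j$ by Lemma~\ref{small.lambda}). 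I would then check that $q+su\in\Delta^{t_j}$ for all sufficiently small $s>0$: for $i\in\mathcal{D}_s$ with $s\le j-1$ the defining constraint is the equality $\langle x,v_i\rangle=\lambda_i-t_s=0$, and $\langle q+su,v_i\rangle=\langle q,v_i\rangle+s\langle u,v_i\rangle=0$ because $v_i\in\bb{R}^{m'}\times\{0\}$ is orthogonal to $u=u''$; for $i\in\mathcal{D}_j$ the constraint is $\langle x,v_i\rangle\le 0$, and $\langle q+su,v_i\rangle=s\langle u,v_i\rangle\le 0$; for $i\notin\mathcal{D}_1\cup\cdots\cup\mathcal{D}_j$ the strict inequality $\langle q,v_i\rangle<\lambda_i-t_j$ survives a small perturbation. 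Thus $q+su\in\Delta^{t_j}$. But $u=u''$ is a non-zero element of $\bb{R}^{m}\times\{0\}$, so $q+su\notin\{0\}^{m}\times\bb{R}^{n-m}\supseteq\Delta^{t_j}$ — a contradiction. This establishes positive spanning, and hence the Lemma.

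The main obstacle is exactly the passage from \emph{linear} to \emph{positive} spanning of the normals $\pi_j^\perp(v_i)$, $i\in\mathcal{D}_1\cup\cdots\cup\mathcal{D}_j$: linear spanning follows fairly easily (e.g. from boundedness of $\Delta^{t_j}$), but only positive spanning gives boundedness of the one-sided region $P$. The delicate point, which is what forces the inductive formulation, is that $\Delta^{t_j}$ is cut out partly by \emph{equalities} inherited from the earlier dimension drops, so a perturbation $q\mapsto q+su$ of a relative-interior point would generically leave $\Delta^{t_j}$ at once — unless $u$ is orthogonal to all the normals $v_i$ with $i\in\mathcal{D}_1\cup\cdots\cup\mathcal{D}_{j-1}$, and it is precisely the inductive hypothesis that pushes $u$ into that orthogonal complement.
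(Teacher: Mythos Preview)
Your proof is correct and rests on the same core idea as the paper's: both extract a non-zero vector $v\in\bb{R}^{m}$ with $\langle v,v_i\rangle\le 0$ for every $i\in\mathcal{D}_1\cup\cdots\cup\mathcal{D}_j$, and derive a contradiction by producing a point of $\Delta^{t_j}$ that lies outside $\{0\}^{m}\times\bb{R}^{n-m}$.

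The executions differ in a way worth noting. The paper perturbs the origin: it looks at $\varepsilon v'$ with $v'=(v,0)$, observes that $\varepsilon v'$ lies in all the half-spaces $h_i^{t_l}$ for $i\in\mathcal{D}_l$, and then asserts that its failure to lie in $\Delta^{t_j}$ must come from some index $i_0\notin\mathcal{D}_1\cup\cdots\cup\mathcal{D}_j$; a limit $\varepsilon\to 0$ forces $\lambda_{i_0}=t_j$, i.e.\ $i_0\in\mathcal{D}_j$, a contradiction. That step tacitly uses that the half-space description $\bigcap_{l\le j}\bigcap_{i\in\mathcal{D}_l}h_i^{t_l}\cap\bigcap_{i\notin\mathcal{D}} h_i^{t_j}$ already equals $\Delta^{t_j}$, even though $\Delta^{t_j}$ is defined with \emph{equalities} for $\mathcal{D}_s$, $s<j$; this is true (via the nesting $\Delta^{t}\subseteq\Delta^{t_{j-1}}$), but is not spelled out. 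Your inductive organization sidesteps this: the inductive hypothesis first pushes $u$ into $\{0\}^{m'}\times\bb{R}^{k_j}$, orthogonal to all earlier normals, so that the equality constraints for $\mathcal{D}_1\cup\cdots\cup\mathcal{D}_{j-1}$ are preserved \emph{exactly} under the perturbation $q\mapsto q+su$ of a relative-interior point. The paper's route is shorter; yours is more explicit about why the equality constraints cause no trouble, which is precisely the point you flag as the main obstacle.
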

If $j=M+1$ the map $\pi_{M+1}^{\perp}$ is simply the identity.
\begin{proof}
Suppose the claim is false, that is, suppose there exists a non-zero vector $v \in \bb{R}^{k_1+\ldots+k_j}$ such that $nv \in P$ for any $n \in \bb{N}$. This means that for all $i \in \mathcal{D}_1\cup \ldots \cup \mathcal{D}_j$ and all $n \in \bb{N}$, $\langle n\,v,\,\pi_j^{\perp}(v_i)\rangle\,\leq a$, so $\langle v,\,\pi_j^{\perp}(v_i)\rangle\,\leq 0.$
 Let $$v'=(v,0) \in \bb{R}^{k_1+\ldots+k_{j}} \times \bb{R}^{k_{j+1}+\ldots +k_{M+1}}.$$
 It follows that $ v' \notin \Delta^{t_j}\subset\{0\}\times \bb{R}^{k_{j+1}+\ldots +k_{M+1}}$ while
 $\langle v', v_i\rangle \leq 0$ for all $i \in \mathcal{D}_1 \cup \ldots \cup \mathcal{D}_j$, i.e. $v' \in \bigcap_{l=1}^j\bigcap_{i \in \mathcal{D}_l} h_i^{t_l}.$
 This would lead to a contradiction if $\{1,\ldots,d\}=\mathcal{D}_1 \cup \ldots \cup \mathcal{D}_j$. Assume now that $\{1,\ldots,d\}\setminus\mathcal{D}_1 \cup \ldots \cup \mathcal{D}_j$ is non-empty. For any $\varepsilon >0$ we have $\varepsilon v' \notin \Delta^{t_j}$ whereas $ \varepsilon v' \in \bigcap_{l=1}^j\bigcap_{i \in \mathcal{D}_l} h_i^{t_l}$, so there must exist $i_{\varepsilon}\in \{1,\ldots,d\} \setminus (\mathcal{D}_1 \cup \ldots \cup \mathcal{D}_j)$ such that $ \varepsilon v' \notin h_{i_{\varepsilon}}^{t_{j}}$. In particular, there exists some $i_0 \in \{1,\ldots,d\} \setminus (\mathcal{D}_1 \cup \ldots \cup \mathcal{D}_j)$
 such that for every sufficiently small $\varepsilon>0$ we have $ \varepsilon v' \notin h_{i_0}^{t_{j}}.$ Since $0 \in h_{i_0}^{t_{j}}$, that would imply $0 \in H_{i_0}^{t_{j}}$ which means that $v_{i_0} \in \bb{R}^{k_1+\ldots+k_j}\times\{0\}$ and $i_0 \in \mathcal{D}_j$. Contradiction.
\end{proof}

\begin{lem}
 The polytopes $\widetilde{\Delta}^{n}_{k}$ and $\widetilde{\Delta}^{k_1+\ldots+k_j}$ given by (\ref{def.pol}) are compact.
\end{lem}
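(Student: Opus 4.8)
The plan is to derive compactness entirely from Lemma~\ref{lemma djcompact}, which already controls the boundedness of the sets cut out by the $\mathcal{D}$-inequalities alone. Each of the sets in \eqref{def.pol} is a finite intersection of closed half-spaces, hence closed, so it suffices to prove that each of them is bounded; then the Heine--Borel theorem gives compactness. The only substantive observation is that imposing additional half-space constraints can only shrink a set, so boundedness is inherited from any subfamily of the defining inequalities.

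First I would handle $\widetilde{\Delta}^{k_1+\cdots+k_j}$ for $j\in\{1,\ldots,M\}$. By Definition~\ref{definition polytopes} it equals
$\{y\in\mathbb{R}^{k_1+\cdots+k_j}\,|\,\langle y,\pi_j^{\perp}(v_i)\rangle\leq t_{j},\ i\in\mathcal{D}_1\cup\cdots\cup\mathcal{D}_j\}$.
Since $\langle y,\pi_j^{\perp}(v_i)\rangle\leq t_{j}$ implies $\langle y,\pi_j^{\perp}(v_i)\rangle< t_{j}+1$, we get the inclusion $\widetilde{\Delta}^{k_1+\cdots+k_j}\subseteq P_j(t_{j}+1)$, and the latter set is bounded by Lemma~\ref{lemma djcompact} applied with $a=t_{j}+1$. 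Being closed and bounded, $\widetilde{\Delta}^{k_1+\cdots+k_j}$ is compact.

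Next I would treat $\widetilde{\Delta}^{n}_{k}$ for $k\in\{0,1,\ldots,N\}$, where for uniformity I set $\lambda_{j_0}:=t_{M+1}$ and recall $\mathrm{I}_0=\emptyset$. The defining intersection runs over $\mathcal{D}_1\cup\cdots\cup\mathcal{D}_{M+1}\cup\mathrm{I}_k$, which contains $\mathcal{D}_1\cup\cdots\cup\mathcal{D}_{M+1}$; discarding the inequalities indexed by $\mathrm{I}_k$ only enlarges the set, so
$\widetilde{\Delta}^{n}_{k}\subseteq\bigcap_{i\in\mathcal{D}_1\cup\cdots\cup\mathcal{D}_{M+1}}\{x\in\mathbb{R}^{n}\,|\,\langle x,v_i\rangle\leq\lambda_{j_k}\}\subseteq P_{M+1}(\lambda_{j_k}+1)$,
using again that $\le\lambda_{j_k}$ implies $<\lambda_{j_k}+1$. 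Applying Lemma~\ref{lemma djcompact} with $j=M+1$, where $\pi_{M+1}^{\perp}=\id$, the set $P_{M+1}(\lambda_{j_k}+1)$ is bounded, hence so is $\widetilde{\Delta}^{n}_{k}$, which is therefore compact.

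There is no genuine obstacle here: the entire content is Lemma~\ref{lemma djcompact} together with the elementary fact that adding half-space constraints preserves boundedness. The only points requiring a little care are matching the index sets $\mathcal{D}_1\cup\cdots\cup\mathcal{D}_j$ (respectively $\mathcal{D}_1\cup\cdots\cup\mathcal{D}_{M+1}$) in Definition~\ref{definition polytopes} with the hypotheses of Lemma~\ref{lemma djcompact}, passing from the non-strict inequalities in \eqref{def.pol} to the strict ones defining $P_j(a)$ by enlarging $a$, and observing that the $k=0$ case is subsumed under the convention $\lambda_{j_0}=t_{M+1}$.
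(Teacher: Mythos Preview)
Your proof is correct and follows essentially the same approach as the paper: both derive compactness directly from Lemma~\ref{lemma djcompact}, with the only cosmetic difference that the paper handles $\widetilde{\Delta}^{n}_{k}$ for $k\geq 1$ by observing $\widetilde{\Delta}^{n}_{k}\subset(\lambda_{j_k}/t_{M+1})\,\widetilde{\Delta}^{n}_{0}$, whereas you apply Lemma~\ref{lemma djcompact} directly with the larger parameter $a=\lambda_{j_k}+1$. Your explicit passage from the non-strict inequalities in \eqref{def.pol} to the strict ones defining $P_j(a)$ is a nice touch that the paper leaves implicit.
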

\begin{proof}
 The polytopes $\widetilde{\Delta}^{k_1+\ldots+k_j}$ and $\widetilde{\Delta}^{n}_{0}$ are compact due to Lemma \ref{lemma djcompact}. The polytope $\widetilde{\Delta}^{n}_{k}$ is contained in $( \lambda_{j_k}\,/ \,t_{M+1})\,\widetilde{\Delta}^{n}_{0}$, i.e. in the polytope $\widetilde{\Delta}^{n}_{0}$ that is rescaled by $( \lambda_{j_k}\,/ \,t_{M+1}).$ Therefore it is also compact.
\end{proof}

Now that we know that the sets defined in \ref{definition polytopes} are in fact compact polytopes we are ready to prove the Theorem \ref{prva}.
\begin{proof} {\it (of Theorem \ref{prva} )} We need to show that
$$\Delta=\bigcap_{k=0}^N \widetilde{\Delta}^n_k,\hskip1mm \textrm{ if }\hskip1mm M=0$$ and
$$\Delta=\bigcap_{k=0}^N \widetilde{\Delta}^n_k\hskip1mm\cap\bigcap_{j=1}^M(\widetilde{\Delta}^{k_1+\cdots+ k_j}\times\mathbb{R}^{n-(k_1+\cdots+ k_j)}),\hskip1mm \textrm{ if }\hskip1mm M\geq1.$$
We start with proving inclusion "$\subset$'' for both cases.
For every $k=1,\ldots, N$ and any $i \in \mathcal{D}_j$ we have $\lambda_i=t_{j}< \lambda_{j_k}$. Hence,
\begin{align*} \Delta & \subset\bigcap_{i\in\mathcal{D}_1\cup\cdots\cup\mathrm{D}_{M+1}}\{x\in\mathbb{R}^n |\langle
 x,v_i\rangle\leq\lambda_i\}\cap\bigcap_{i\in\mathrm{I}_k}\{x\in\mathbb{R}^n |\langle x,v_i\rangle\leq\lambda_{j_k}\}\\
&\subset\bigcap_{i\in \mathcal{D}_1\cup\cdots\cup\mathrm{D}_{M+1}\cup\mathrm{I}_k}\{x\in\mathbb{R}^n |\langle
 x,v_i\rangle\leq\lambda_{j_k}\}=\widetilde{\Delta}^{n}_{k}.\end{align*}
Furthermore, since $t_{1}<\ldots<t_{M+1}$ it holds:
\begin{align*} \Delta &\subset \bigcap_{i\in\mathcal{D}_1\cup\cdots\cup\mathrm{D}_{M+1}} \{x\in\mathbb{R}^n |\langle x,v_i\rangle\leq\lambda_i=t_{i}\}\\
& \subset
\bigcap_{i\in\mathcal{D}_1\cup\cdots\cup\mathrm{D}_{M+1}} \{x\in\mathbb{R}^n |\langle x,v_i\rangle\leq t_{M+1}\} =\widetilde{\Delta}^n_0.\end{align*}
Recall that, if $M\geq1$, for any $j\in\{1,\ldots,M\}$ and any $i \in \mathcal{D}_1\cup\cdots\cup\mathcal{D}_j$ we have $\pi_j(v_i)=0$ and therefore
 $$\langle x,v_i\rangle=\langle\pi^{\bot}_j(x),\pi^{\bot}_j(v_i)\rangle+\langle\pi_j(x),\pi_j(v_i)\rangle= \langle\pi^{\bot}_j(x),\pi^{\perp}_j(v_i)\rangle.$$
 Thus for any $j\in\{1,\ldots,M\}$ we have
\begin{align*} \Delta &\subset\bigcap_{i\in\mathcal{D}_1\cup\cdots\cup\mathcal{D}_j}\{x\in\mathbb{R}^n |\langle x,v_i\rangle\leq t_{i}\}
\subset\bigcap_{i\in\mathcal{D}_1\cup\cdots\cup\mathcal{D}_j}\{x\in\mathbb{R}^n |\langle x,v_i\rangle\leq t_{j}\}\\
&=\bigcap_{i\in\mathcal{D}_1\cup\cdots\cup\mathcal{D}_j}\{x\in\mathbb{R}^n |\langle \pi^{\bot}_j(x),\pi^{\bot}_j(v_i)\rangle\leq t_{j}\}\\
&=\bigcap_{i\in\mathcal{D}_1\cup\cdots\cup\mathcal{D}_j}\{y\in\mathbb{R}^{(k_1+\cdots+k_j)} |\langle y,\pi^{\bot}_j(v_i)\rangle\leq t_{j}\}\times\mathbb{R}^{n-(k_1+\cdots+k_j)}\\
&=\widetilde{\Delta}^{k_1+\ldots+k_j}\times\mathbb{R}^{n-(k_1+\cdots+k_j)}.\end{align*}
This proves one inclusion.
 To prove the other one, take an arbitrary $x$ in the intersection on the right. Let $i\in\{1,\ldots,d\}$ be an arbitrary index.
 Since $\{1,\ldots,d\}=\mathrm{I}_1\cup\cdots\cup\mathrm{I}_N\cup\mathcal{D}_1\cup\cdots\cup\mathcal{D}_{M+1},$ it follows either $i\in\mathrm{I}_k,$ for some $k\in\{1,\ldots,N\}$ or $i\in\mathcal{D}_j,$ for some $j\in\{1,\ldots,M+1\}.$ If $i\in\mathrm{I}_k,$ since $x\in\widetilde{\Delta}^n_k\subset\bigcap_{i\in\mathrm{I}_k}\{x\in\mathbb{R}^n |\langle x,v_i\rangle\leq\lambda_i=\lambda_{j_k}\},$
 we have $\langle x,v_i\rangle\leq\lambda_i.$ Similarly if $i\in\mathcal{D}_{M+1}$. Since
 $x\in\widetilde{\Delta}^n_0\subset\bigcap_{i\in\mathcal{D}_{M+1}}\{x\in\mathbb{R}^n |\langle x,v_i\rangle\leq\lambda_i=t_{M+1}\},$ we have $\langle x,v_i\rangle\leq\lambda_i.$
If $M\geq1$ and $i\in\mathcal{D}_j,\hskip1mm j\in\{1,\ldots,M\},$ since $x=(\pi^{\bot}_j(x),\pi_j(x))\in\widetilde{\Delta}^{k_1+\cdots+k_j}\times\mathbb{R}^{n-(k_1+\cdots+k_j)},$ it follows
 $\langle\pi^{\bot}_j(x),\pi^{\bot}_j(v_i)\rangle\leq\lambda_i=t_{j}.$ But $\pi_j(v_i)=0$ hence
 $\langle x,v_i\rangle_{\mathbb{R}^n}=\langle\pi^{\bot}_j(x),\pi^{\bot}_j(v_i)\rangle\leq\lambda_i=t_{j}.$
 Since index $i\in\{1,\ldots,d\}$ was arbitrary, we proved $x\in\Delta.$
 \end{proof}

\begin{example}\label{inter.newfirstexample} We apply the above construction to the polytope $\Delta$ given in Example \ref{newfirstexample}. We have $M=0$ and $N=1.$ Therefore
$\Delta=\bigcap_{k=0}^1 \widetilde{\Delta}^2_k,$ where $\mathcal{D}_{M+1}=(\mathcal{D}_1)=\{1,2,3,4,5\},\hskip1mm \mathrm{I}_1=\{6\}$ and
\begin{align*} \widetilde{\Delta}^2_0 =\{x\in\mathbb{R}^2|&\langle x,(-1,0)\rangle\leq3,\langle x,\pm(0,1)\rangle\leq3,\langle x,(1,\pm1)\rangle\leq3 \},\end{align*}
\begin{align*} \widetilde{\Delta}^2_1 =\{x\in\mathbb{R}^2|&\langle x,(-1,0)\rangle\leq4,\langle x,\pm(0,1)\rangle\leq4,\langle x,(1,\pm1)\rangle\leq4, \\
&
\langle x,(2,1)\rangle\leq4 \}.\end{align*}
Note that the polytope $\widetilde{\Delta}^2_0$ is simple but it is not smooth, whereas the polytope $\widetilde{\Delta}^2_1$ is simple and smooth (see Figure \ref{figure inter.newfirstexample}).

 \begin{figure}\centering 
\includegraphics[width=4cm]{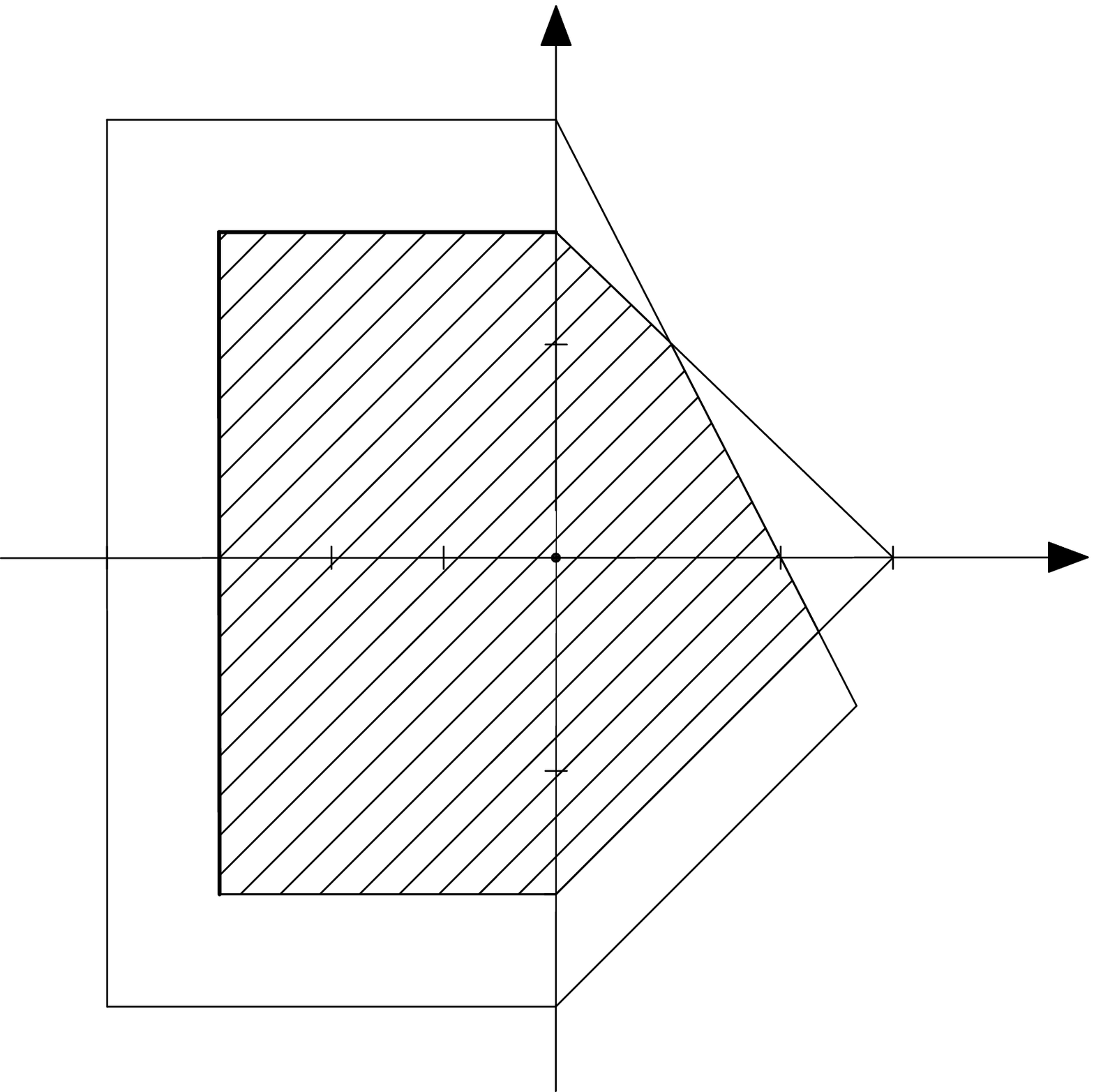}
\caption{Example \ref{inter.newfirstexample}}
\label{figure inter.newfirstexample}
\end{figure}

\end{example}

\begin{example}\label{inter.verynewfirst} The polytope $\Delta$ in Example \ref{verynewfirstexample} has $M=1$ and $N=0.$ Therefore
$\Delta=\widetilde{\Delta}^3_0\hskip1mm\cap(\widetilde{\Delta}^{1}\times\mathbb{R}^2),$ where
 \begin{align*} \widetilde{\Delta}^3_0 =\{x\in\mathbb{R}^3|&\langle x,(\pm1,0,1)\rangle\leq2,\langle x,(0,\pm1,1)\rangle\leq2,\langle x,(0,,0,\pm1)\rangle\leq2 \},\end{align*}
 \begin{align*} \widetilde{\Delta}^1 =\{z\in\mathbb{R}|&\langle z,\pm1\rangle\leq1 \}.\end{align*}
The polytope $\Delta^3_0$ is neither simple nor smooth, whereas the polytope $\Delta^1$ is smooth and simple (see Figure \ref{figure inter.verynewfirst}).

 \begin{figure}\centering 
\includegraphics[width=6.5cm]{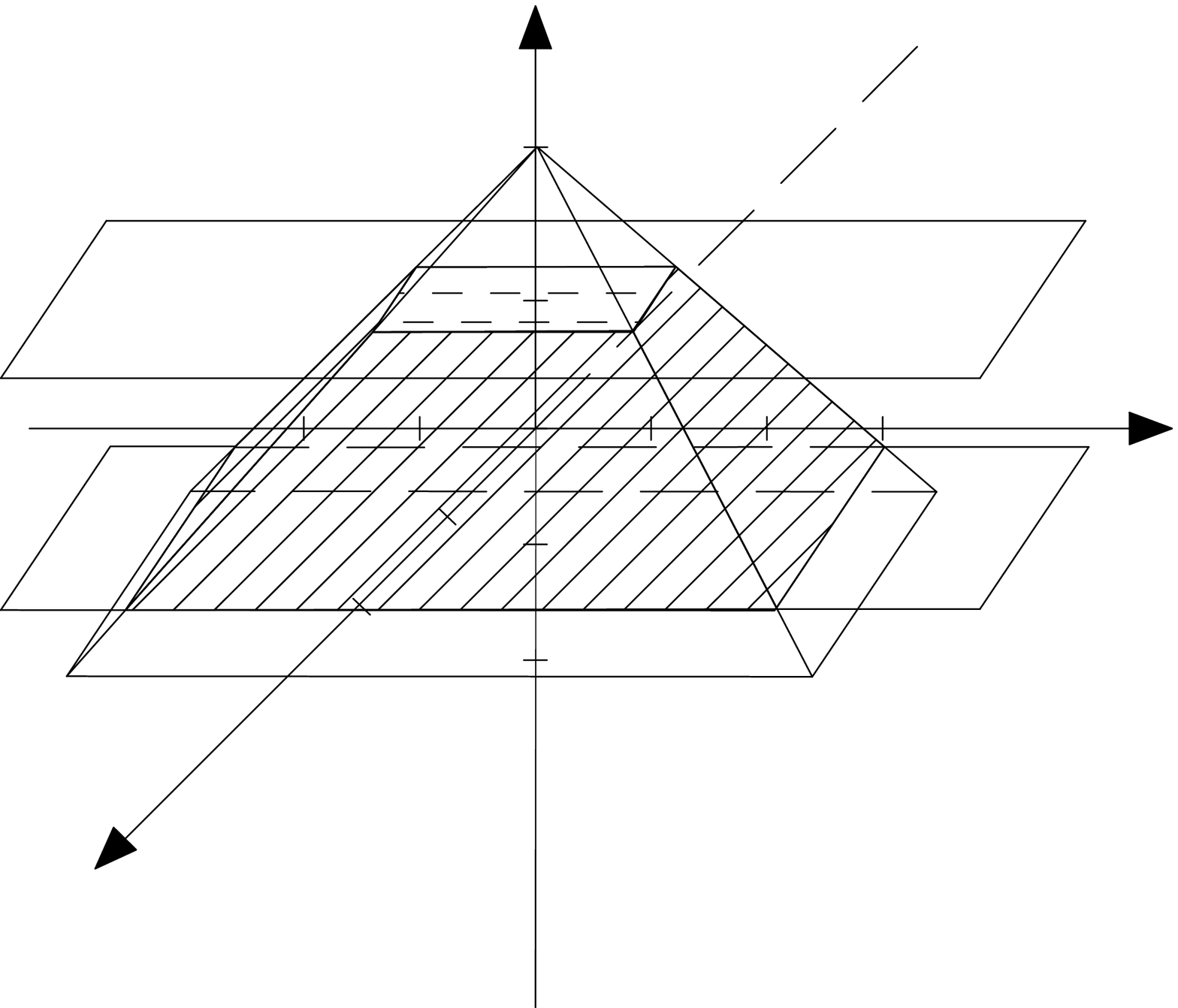}
\caption{Example \ref{inter.verynewfirst}}
\label{figure inter.verynewfirst}
\end{figure}
\end{example}

\begin{example}\label{interthirdexample} The polytope $\Delta$ given in Example \ref{thirdexample} has $M=1$ and $N=1.$ Therefore
$\Delta=\bigcap_{k=0}^1 \widetilde{\Delta}^2_k\hskip1mm\cap(\widetilde{\Delta}^{1}\times\mathbb{R}),$ where
$\mathcal{D}_1=\{1,2\},\hskip1mm \mathcal{D}_2=\{3,5\}$ and
$\mathrm{I}_1=\{4\}.$ Hence
\begin{align*} \widetilde{\Delta}^1 =\{y\in\mathbb{R}|&\langle y,\pm1\rangle\leq2 \},\end{align*}
 \begin{align*} \widetilde{\Delta}^2_0 =\{x\in\mathbb{R}^2|&\langle x,\pm(0,1)\rangle\leq6,\langle x,(1,1)\rangle\leq6,\langle x,(-1,0)\rangle\leq6 \},\end{align*}
 \begin{align*} \widetilde{\Delta}^2_1 =\{x\in\mathbb{R}^2|&\langle x,\pm(0,1)\rangle\leq8,\langle x,(1,1)\rangle\leq8,\langle x,(-1,0)\rangle\leq8,
\langle x,(2,-1)\rangle\leq8 \}\end{align*}
See Figure \ref{figure interthirdexample}. All the polytopes that define the intersection are simple but not necessarily smooth ($\widetilde{\Delta}^2_1$ is not smooth).

 \begin{figure}\centering 
\includegraphics[width=5cm]{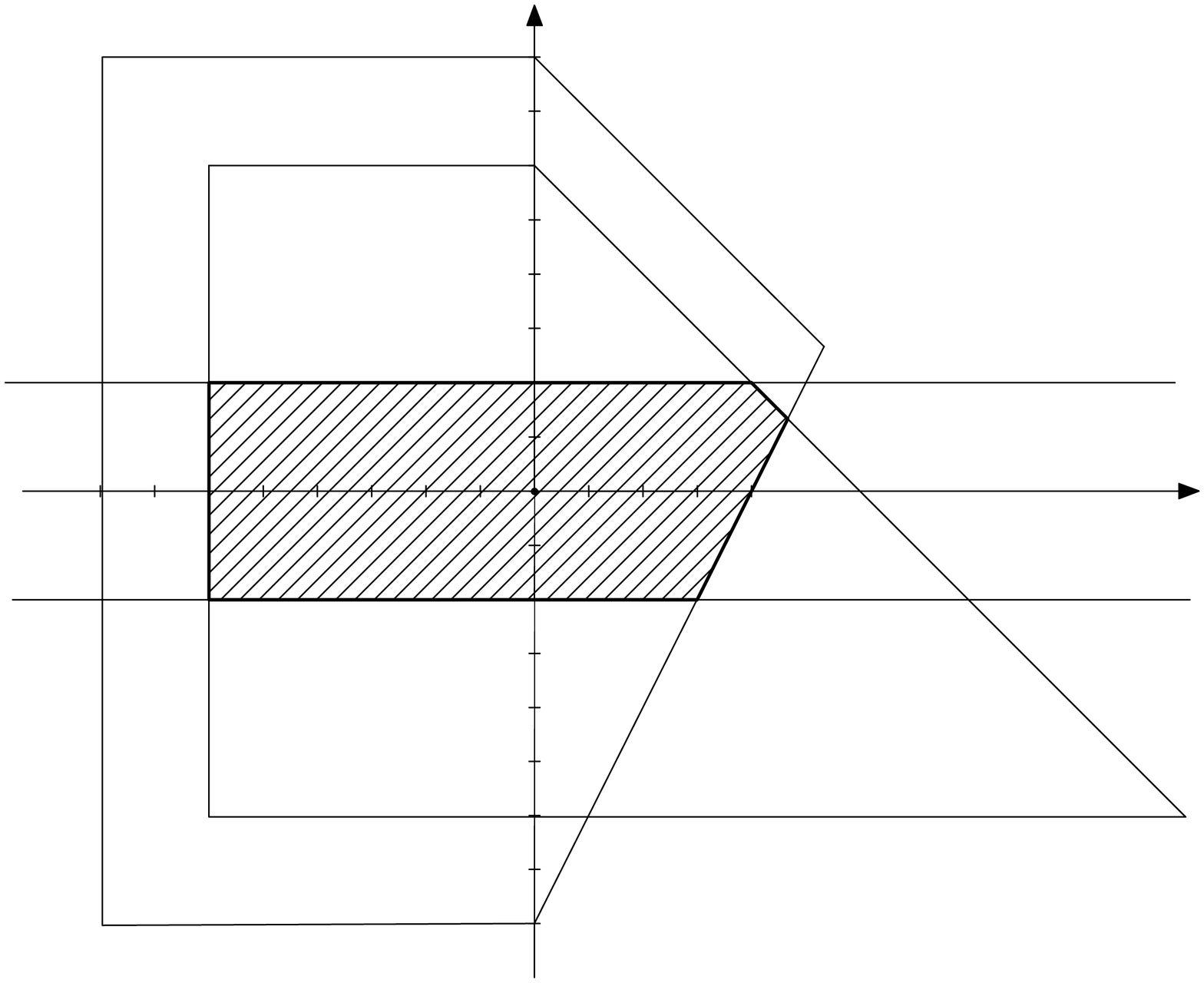}
\caption{Example \ref{interthirdexample}}
\label{figure interthirdexample}
\end{figure}
\end{example}

\section{Centered symplectic reduction}\label{section centered reduction}
In Sections \ref{section introduction} and \ref{section shrinking} we explained how a compact symplectic toric orbifold $\M^{2n}$, with $T=(S^1)^n$ torus action, is determined by a simple, labeled polytope, centered at the origin
\begin{equation}\label{polytopedelta}
  \mu(\M)=\Delta=\bigcap_{i=1}^d\{x\in\mathbb{R}^n |\langle x,v_i\rangle\leq\lambda_i\},
\end{equation}
where $v_i=a_i \cdot w_i$, $w_i$ is the primitive outward normal to the $i$-th facet and $a_i \in \bb{Z}_{+}$ is the label on that facet. 
Lerman and Tolman in \cite{LermanTolman:1997} showed that the orbifold $\M$ can be obtained as a symplectic reduction of $(\mathbb{C}^{d},\frac{i}{2}\sum dz_k\wedge d\bar{z}_k)$ with respect to the (not necessarily connected) subgroup $K\subset T^d,$ the kernel of the map from $T^d=\mathbb{R}^{d}/(2\pi \mathbb{Z})^d$ to $T^n=\mathfrak{t}^n/\mathfrak{t}^n_{\mathbb{Z}}= \mathbb{R}^{n}/(2\pi \mathbb{Z})^n$ induced by the linear map given by
\begin{equation}\label{projection}
  \pi(e_i)=v_{i},\hskip1mm i\in\{1,\ldots,d\}
\end{equation}
(so $Lie\,( K)=\ker(\pi)\subset Lie(T^d)\cong \bb{R}^d)$. Precisely, the standard action of the torus $T^d$ on $(\mathbb{C}^{d},\frac{i}{2}\sum dz_k\wedge d\bar{z}_k)$
 $$(t_1,\ldots,t_d)\ast (z_1,\ldots ,z_d)\longmapsto(t_1 z_1,\ldots, t_d z_d)$$
 is a Hamiltonian action whose moment map is of the form 
 $$(z_1,\ldots, z_d) \mapsto  -\frac{1}{2}(|z_1|^2,\ldots,|z_d|^2)+const$$ for some constant (recall that we use the convention $S^1=\mathbb{R}/2\pi\mathbb{Z}).$
 Following the convention in \cite{LermanTolman:1997} we choose 
 $\Phi \colon \mathbb{C}^{d}\rightarrow \mathbb{R}^d$ given by 
 \begin{equation}\label{momentmap}
 \Phi(z_1,\ldots, z_d) = -\frac{1}{2}(|z_1|^2,\ldots,|z_d|^2)+(\lambda_1,\ldots,\lambda_d)
 \end{equation}
 as a moment map. The induced action of the subgroup $K\subset T^d$ on $\mathbb{C}^d$ is Hamiltonian, with a moment map $\iota^{\ast}\circ\Phi$, where $\iota \colon Lie(K)\rightarrow Lie(T^{d})$ is the map induced by inclusion $K \hookrightarrow T^{d}$. 
 The orbifold $\M$ is the quotient of the level $(\iota^{\ast}\circ\Phi)^{-1}(0)$ by the group $K$ and the toric action on it is the action of the residual, $n$-dimensional torus $T^d /K$. 
 Let $p \colon  (\iota^{\ast}\circ\Phi)^{-1}(0) \rightarrow \M$ denote the quotient map.
 As the kernel of $\iota^* \colon Lie(T^d)^* \rightarrow Lie(K)^*$ is naturally isomorphic to $Lie(T^d/K)^*$ we can view $\Phi_{|(\iota^{\ast}\circ\Phi)^{-1}(0)}$ as a map to $Lie(T^d/K)^*$ and uniquely define a moment map $\Phi_T \colon \M \rightarrow Lie(T^d/K)^*$ for the residual $T^d/K$ action on $\M$ by $\Phi_T \circ p = \Phi$ on $(\iota^{\ast}\circ\Phi)^{-1}(0)$.
 If we choose an appropriate identification of $T^d/K$ with $T$, then the polytope $\Phi_T(\M) \subset Lie(T^d/K)^* \cong \mathfrak{t}^* \cong \bb{R}^n$ is exactly $\Delta$ we started with, see \cite{LermanTolman:1997}).
 (Otherwise we would obtain a polytope that differs from $\Delta$ by an $\pm SL(n,\bb{Z})$ transformation.) 
 One interesting property of polytopes that we will use later is contained in the following lemma.
\begin{lem}\label{sum.normals} Let $P=\bigcap_{i=1}^d\{x\in\mathbb{R}^n |\langle x,v_i\rangle\leq\lambda_i\}$ be a compact convex rational (not necessarily simple) labeled polytope.
Then, there are coprime $m_i\in\mathbb{Z}_{+}$ such that $\sum_{i=1}^d m_i v_i=0.$
\end{lem}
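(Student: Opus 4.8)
The plan is to exploit compactness of $P$ directly. Since $P=\bigcap_{i=1}^d\{x\in\mathbb{R}^n\mid\langle x,v_i\rangle\leq\lambda_i\}$ is a bounded polytope, its recession cone is trivial, i.e. $\{v\in\mathbb{R}^n\mid\langle v,v_i\rangle\leq 0\text{ for all }i\}=\{0\}$. Equivalently, the convex cone $C$ generated by $v_1,\dots,v_d$ has nonempty interior and in fact equals all of $\mathbb{R}^n$: if it were contained in a closed half-space $\{x\mid\langle x,v\rangle\geq 0\}$ for some $v\neq0$, then this $v$ would be a nonzero element of the recession cone, contradicting boundedness. So $C=\mathbb{R}^n$, which means that $0$ lies in the interior of the convex hull of $\{v_1,\dots,v_d\}$ relative to the cone structure; more precisely, $0$ can be written as a positive combination $\sum_{i=1}^d c_i v_i=0$ with all $c_i>0$. (To get strict positivity of every coefficient: since $C=\mathbb{R}^n$, for each $i$ the vector $-v_i$ lies in $C$, so $-v_i=\sum_j c_{ij}v_j$ with $c_{ij}\geq0$; summing $v_i+\sum_j c_{ij}v_j=0$ over $i$ and adding a small multiple of one such relation to kill any remaining zero coefficients yields a relation $\sum_i c_i v_i=0$ with every $c_i>0$.)

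Next I would pass from real coefficients to integer ones. The set of real solutions $(c_1,\dots,c_d)$ to the linear system $\sum_i c_i v_i=0$ is a rational linear subspace $L\subseteq\mathbb{R}^d$, because the $v_i$ are integral (indeed rational) vectors. By the previous paragraph $L$ meets the open positive orthant $\{c_i>0\text{ for all }i\}$. A rational subspace that meets an open rational cone must contain a rational point of that cone — rational points are dense in $L$ — hence $L$ contains a point $(q_1,\dots,q_d)$ with all $q_i\in\mathbb{Q}_{>0}$. Clearing denominators gives $(n_1,\dots,n_d)\in\mathbb{Z}_{+}^d$ with $\sum_i n_i v_i=0$. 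Finally, dividing by $\gcd(n_1,\dots,n_d)$ produces coprime positive integers $m_i$ with $\sum_{i=1}^d m_i v_i=0$, as required.

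The only genuinely substantive point is the first one: extracting a relation in which \emph{every} coefficient is strictly positive, rather than merely nonnegative with some possibly zero. This is where compactness of $P$ is used in an essential way — boundedness forces the normals to positively span $\mathbb{R}^n$, and a boundary facet normal $v_i$ with a sign constraint would correspond to an unbounded direction. The passage to integers is then routine density/denominator-clearing, and coprimality is immediate. Note that simplicity of $P$ is not needed anywhere, consistent with the statement of the lemma.
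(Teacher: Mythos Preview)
Your proof is correct. Both you and the paper exploit compactness to produce a strictly positive linear relation among the $v_i$ and then pass to integers, but the mechanics differ. The paper observes that the (inner) normal fan of $P$ covers $\mathbb{R}^n$, so the integral vector $v=v_1+\cdots+v_d$ lies in the cone generated by $\{-v_i : i\in I_V\}$ at some vertex $V$; writing $v=\sum_{i\in I_V}\alpha_i(-v_i)$ with $\alpha_i\in\mathbb{Q}_{\ge 0}$ (rationality coming directly from the fact that the cone is rational and $v$ is integral) immediately gives $\sum_{i\notin I_V} v_i+\sum_{i\in I_V}(1+\alpha_i)v_i=0$ with every coefficient at least~$1$, and one just clears denominators. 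You instead argue that the positive cone of the $v_i$ is all of $\mathbb{R}^n$, write each $-v_i$ as a nonnegative combination, sum to get a strictly positive \emph{real} relation, and then invoke density of rational points in the rational kernel $L=\{c:\sum c_iv_i=0\}$ to find a rational relation in the open orthant. Your route is more elementary in that it avoids any mention of the fan, at the cost of the extra density step; the paper's route gets rationality for free from the rational cone structure but implicitly uses that an integral vector in a rational polyhedral cone admits a rational nonnegative representation. One small remark: your parenthetical about ``adding a small multiple of one such relation to kill any remaining zero coefficients'' is unnecessary, since summing the relations $v_i+\sum_j c_{ij}v_j=0$ over all $i$ already yields coefficient $1+\sum_i c_{ik}\ge 1$ on each $v_k$.
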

\begin{proof} Since $P$ is compact, its associated fan is the whole $\mathbb{R}^n$. Take the integral vector $v:=v_1+\cdots+v_d.$ There exists a vertex $V$ of $P$ such that the vector $v$ belongs to the cone $C_V$ of the fan of $P$. Let $\mathrm{I}_V$ be the set of indices of facets meeting at $V$. As $v$ is integral and $\Delta$ is rational, it follows that $v_1+\cdots+v_d=\sum_{i\in\mathrm{I}_V}\frac{p_i}{q_i}(-v_i),$ for some $\frac{p_i}{q_i}\in\mathbb{Q}_{+}\cup \{0\}.$
Multiplying the above equation by the least common multiple of denominators $q_i, i \in I_V$, we obtain the desired equation.
\end{proof}
\begin{remark} Solution to Minkowski problem for polytopes provides an example of $m_i \in \bb{R}_+$, $i=1,\ldots,d$ satisfying condition $\sum_{i=1}^d m_i v_i=0$ of Lemma \ref{sum.normals}. For any compact convex rational (not labeled) polytope with unit normals $v_1,\ldots,v_d$ one has $0=\sum_{i=1}^d m_i\,v_i$ where $m_i$ is the Euclidean area of the $i-$th facet. This implies that if $v_1,\ldots,v_d$ are primitive normals then one can take $m_i$ to be the symplectic area of the $i$-th facet. For more details see for example a recent work of \cite{Klain:2004} reproving solution to Minkowski problem for polytopes, or references therein.
\end{remark}
 In  \cite[Section 4.3]{AbreuMacarini:2013} Abreu and Macarini show that every symplectic toric manifold corresponding to a monotone Delzant polytope is a centered reduction of a weighted projective space $\mathbb{CP}(1,m_2,\ldots,m_{d})$. We generalize this result and prove the following theorem.
 \begin{prop}\label{mon.symp.red} Every compact symplectic toric orbifold is a symplectic reduction of a weighted projective space. Moreover, if the orbifold corresponds to a monotone labeled polytope then the reduction is centered.
\end{prop}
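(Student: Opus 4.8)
The plan is to exhibit, for an arbitrary compact symplectic toric orbifold $\M$ with centered labeled polytope $\Delta=\bigcap_{i=1}^d\{x : \langle x,v_i\rangle\leq\lambda_i\}$, an ambient weighted projective space and a level set of a residual torus action whose symplectic reduction is $\M$. First I would apply Lemma \ref{sum.normals} to the polytope $\Delta$ to obtain coprime positive integers $m_1,\ldots,m_d$ with $\sum_{i=1}^d m_i v_i=0$. These weights will be the weights of the weighted projective space $\mathbb{CP}(m_1,\ldots,m_d)$, realized as the symplectic reduction of $(\mathbb{C}^{d+1},\frac{i}{2}\sum dz_k\wedge d\bar z_k)$ by the circle $S^1 \hookrightarrow T^{d+1}$ whose weights are $(m_1,\ldots,m_d,c)$ for a suitable positive integer $c$ chosen so that the relevant level set of $\Phi$ is nonempty and the rescaling parameter $\lambda$ matches; equivalently, one presents $\mathbb{CP}(m_1,\ldots,m_d)$ directly via its own reduction of $\mathbb{C}^{d+1}$ in the style of the formula for $P$ in the introduction.

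The key step is to compare the two-stage reduction ``$\mathbb{C}^{d+1}\rightsquigarrow \mathbb{CP}(m_1,\ldots,m_d)\rightsquigarrow \M$'' with the one-stage reduction ``$\mathbb{C}^{d}\rightsquigarrow \M$'' coming from Lerman--Tolman (equations \eqref{projection}--\eqref{momentmap}), using the standard fact that iterated symplectic reduction by a subgroup and then by a quotient agrees with reduction by the whole group (reduction in stages). Concretely, inside $T^{d}$ the Lerman--Tolman subgroup $K=\ker\pi$ with $\pi(e_i)=v_i$ contains the circle generated by $(m_1,\ldots,m_d)$ precisely because $\sum m_i v_i=0$; so $K$ has a subcircle $K_0$ realizing the weighted projective space, and $K/K_0$ is the residual group whose reduction of $\mathbb{CP}(m_1,\ldots,m_d)$ yields $\M$. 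I would track the moment map $\Phi$ of \eqref{momentmap} through this factorization to identify the level set, and read off the resulting labeled polytope as the slice of the simplex moment polytope of $\mathbb{CP}(m_1,\ldots,m_d)$ by the affine subspace dual to $K/K_0$; by the Lerman--Tolman uniqueness this slice is $\Delta$, so the reduced orbifold is $\M$.

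For the ``moreover'' statement, suppose $\Delta$ is monotone, so $\lambda_i=\lambda$ for all $i$. The central fiber $T_0$ of $\mathbb{CP}(m_1,\ldots,m_d)$ is the set where all $|z_k|^2$ are equal; under $\Phi$ this is the preimage of the point where all $d$ coordinates of the moment image agree, which by the monotonicity $\lambda_i=\lambda$ is exactly the facet-equidistant point, i.e. the barycentric point that maps to the origin $0\in\Delta$ under the residual moment map $\Phi_T$. Thus the level set of the $K/K_0$-action defining the reduction to $\M$ passes through $T_0$, which is the definition of a centered reduction. I expect the main obstacle to be bookkeeping: choosing the auxiliary weight $c$ (or the extra $\mathbb{C}$-factor) and the identifications $T^{d}/K\cong T$, $T^{d+1}/K_0 \cong T^{d}$ compatibly so that no spurious $\pm SL(n,\mathbb{Z})$ or rescaling discrepancy appears, and verifying that the point of $\mathbb{CP}(m_1,\ldots,m_d)$ lying over the center of $\Delta$ really is $T_0$ rather than a nearby fiber — this hinges on the monotonicity hypothesis and is where the coprimality/normalization of the $m_i$ from Lemma \ref{sum.normals} must be used carefully.
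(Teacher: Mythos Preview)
Your core strategy matches the paper's: apply Lemma~\ref{sum.normals} to obtain coprime $m_1,\ldots,m_d$ with $\sum m_i v_i=0$, observe that this places the circle $t\mapsto(t^{m_1},\ldots,t^{m_d})$ inside the Lerman--Tolman kernel $K\subset T^d$, and perform the reduction in stages. However, you have overcomplicated the first stage. The weighted projective space $\mathbb{CP}(m_1,\ldots,m_d)$ is, by its very definition in the introduction, the reduction of $\mathbb{C}^{d}$ (not $\mathbb{C}^{d+1}$) by precisely this circle; no auxiliary weight $c$ and no extra $\mathbb{C}$-factor are needed or correct. Once you drop the spurious $(d{+}1)$-st coordinate, the entire argument takes place inside the original $\mathbb{C}^d$ and $T^d$: split $K=K_1\times K_2$ with $K_1$ the circle above, reduce $\mathbb{C}^d$ by $K_1$ to obtain $\mathbb{CP}(m_1,\ldots,m_d)$, then reduce by $K_2$ to obtain $\M$. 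The bookkeeping difficulties you anticipate (choosing $c$, reconciling $T^{d+1}$ with $T^d$) simply do not arise.

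For the monotone part your outline is right but vaguer than necessary. With $\lambda_i=\lambda$ the paper simply computes, for any $[z_1,\ldots,z_d]\in T_0$ (so $|z_j|^2=2\lambda$ for all $j$),
\[
\mu_2([z_1,\ldots,z_d])=\iota_2^*\,\Phi(z_1,\ldots,z_d)=\iota_2^*\bigl(-\tfrac12(2\lambda,\ldots,2\lambda)+(\lambda,\ldots,\lambda)\bigr)=\iota_2^*(0)=0,
\]
giving $T_0\subset\mu_2^{-1}(0)$ directly. No appeal to ``barycentric points'' or to the coprimality of the $m_i$ is needed at this step.
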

\begin{proof} Let a compact symplectic toric orbifold $\M$ correspond to a labeled polytope $\Delta$ given by (\ref{polytopedelta}). 
As before, we denote by $K$ the subgroup of $T^d$ for which $Lie(K)=\ker \pi$,  where $\pi$ is given by (\ref{projection}).
Let $K_1$ be a circle in $T^d$ given by the image of the inclusion $t\hookrightarrow(t^{m_1},\ldots,t^{m_d}),$
 where $m_1,\ldots,m_d \in \bb{Z}_+$ are from Lemma \ref{sum.normals}. 
 Then $K_1$ is also a subtorus of $K$.
 Let $K_2\subset T^d$ be a subgroup such that $K=K_1\times K_2.$ Reduce $(\mathbb{C}^{d},\frac{i}{2}\sum dz_k\wedge d\bar{z}_k)$ in two stages. First reduce with respect to the subtorus $K_1\subset K$ and obtain a weighted projective space $\mathbb{CP}(m_1,\ldots,m_{d}).$ Then reduce $\mathbb{CP}(m_1,\ldots,m_{d})$ with respect to $K_2$. Reducing $(\mathbb{C}^{d},\frac{i}{2}\sum dz_k\wedge d\bar{z}_k)$ with respect to $K$ is equivalent to reducing $\mathbb{CP}(m_1,\ldots,m_{d})$ with respect to $K_2$. Therefore, the orbifold $\M$ is a reduction of $\mathbb{CP}(m_1,\ldots,m_{d}).$\\
Assume now that $\Delta$ is monotone, i.e $\lambda_i=\lambda$. As before, (equation \eqref{momentmap}), we take 
$$\Phi(z_1,\ldots, z_d) = -\frac{1}{2}(|z_1|^2,\ldots,|z_d|^2)+(\lambda,\ldots,\lambda)$$
as a moment map for the standard $T^d$-action on $\mathbb{C}^d.$
  Let $\iota_j:Lie(K_j)\rightarrow \mathbb{R}^d$ 
be the map induced by inclusion $K_j\hookrightarrow T^d, \hskip1mm j\in\{1,2\}$.
Then $\iota_1^{\ast}\circ\Phi$ is a moment map for the $K_1$ action on $\bb{C}^d$. Let $$p_1:(\iota_1^{\ast}\circ\Phi)^{-1}(0)\rightarrow (\iota_1^{\ast}\circ\Phi)^{-1}(0)/K_1=\mathbb{CP}(m_1,\ldots,m_{d})$$ be the quotient map. For a moment map for $K_2$ action on $\mathbb{CP}(m_1,\ldots,m_{d})$ take the unique map $\mu_2 \colon \mathbb{CP}(m_1,\ldots,m_{d}) \rightarrow Lie (K_2)^*$  that makes the following diagram commutative
\begin{displaymath}
    \xymatrix{
        {\mathbb{C}^d\supseteq(\iota_1^{\ast}\circ\Phi)^{-1}} \ar[r]^{\hskip8mm\Phi} \ar[d]_{p_1} & \mathbb{R}^{d} \ar[d]^{\iota_2^{\ast}} \\
        {\mathbb{CP}(m_1,\ldots,m_{d})} \ar[r]_{\hskip6mm\mu_2}       & Lie (K_2)^* }
\end{displaymath}
i.e. $\mu_2\circ p_1=\iota_2^{\ast}\circ\Phi$ on $(\iota_1^{\ast}\circ\Phi)^{-1}(0).$ The orbifold $\M$ is the quotient $\mu_2^{-1}(0)/K_2.$  
The central torus fiber of $\mathbb{CP}(m_1,\ldots,m_{d})$ is given by
$$T_{0}=\{[z_1,\ldots,z_{d}]\in\mathbb{CP}(m_{1},\ldots,m_{d})|\hskip1mm |z_1|^2=\cdots=|z_{d}|^2=2\lambda\}.$$
 In order to prove that the reduction $\M=\mathbb{CP}(m_1,\ldots,m_{d})// \,K_2$ is a centered reduction, we have to show that the central torus fiber $T_0$
 is contained in the level $\mu_2^{-1}(0).$ Let $([z_1,\ldots,z_{d}])\in T_0$ be an arbitrary point. Then we have that
\begin{align*}
\mu_2([z_1,\ldots,z_{d}])&=\mu_2(p_1(z_1,\ldots,z_{d}))=\iota_2^{\ast}(\Phi(z_1,\ldots,z_{d}))\\
&=\iota_2^{\ast}(-\frac{1}{2}(2\lambda,\ldots,2\lambda)+(\lambda,\ldots,\lambda))=\iota_2^{\ast}(0)=0.
\end{align*}
The above computation shows how the monotonicity of $\Delta$ implies that the reduction is centered.
 \end{proof}

Now we are ready to prove the second main theorem in this paper.
\subsection{Proof of Theorem \ref{druga}}

Let $\M$ be a compact symplectic toric orbifold. In this subsection we prove Theorem \ref{druga} saying that $\M$ is a centered reduction of a Cartesian product of weighted projective spaces.

\begin{proof}
 Let $\Delta$ be a labeled polytope corresponding to $\M$ via \cite{LermanTolman:1997}. We can assume $\Delta$ is centered (as translation does not change the orbifold). Theorem \ref{prva} implies that the polytope $\Delta$ can be presented as (\ref{broj2}) or (\ref{broj22}). Without a loss of generality we assume that $\Delta=\widetilde{\Delta}_1\cap\widetilde{\Delta}_2$ or $\Delta=\widetilde{\Delta}_1\cap(\widetilde{\Delta}^{k}\times\mathbb{R}^{n-k}),$
  where $\widetilde{\Delta}_{j}=\bigcap_{i=1}^{d_j}\{x\in\mathbb{R}^{n}|\hskip1mm\langle x,v_{j,i}\rangle\leq\lambda_{j}\}$, $j\in\{1,2\}$ and
$\widetilde{\Delta}^{k}=\bigcap_{i=1}^{d_2}\{y\in\mathbb{R}^{k}|\hskip1mm\langle y,\pi_j^\bot(v_{2,i})\rangle\leq\lambda_2\}$
 are monotone compact labeled polytopes and in the second case we have $v_{2,i}=(\pi_j^\bot(v_{2,i}),0)\in \bb{R}^n$ (see Section \ref{section main proof} for details). Therefore
 $$\{y\in\mathbb{R}^{k}|\hskip1mm\langle y,\pi_j^\bot(v_{2,i})\rangle\leq\lambda_2\}\times\mathbb{R}^{n-k}=\{x\in\mathbb{R}^{n}|\hskip1mm\langle x,v_{2,i}\rangle\leq\lambda_{2}\}.$$ Hence, in both cases polytope $\Delta$ is written as an intersection of $d=d_1+d_2$ half spaces (though $\Delta$ may have less than $d$ facets)
 $$\Delta=\bigcap_{j=1}^2\bigcap_{i=1}^{d_j}\{x\in\mathbb{R}^{n}|\hskip1mm\langle x,v_{j,i}\rangle\leq\lambda_{j}\}.$$
 Following the work of Lerman and Tolman we conclude that the orbifold $\M$ is a symplectic reduction of $(\mathbb{C}^{d},\frac{i}{2}\sum dz_k\wedge d\bar{z}_k)$ with respect to the subgroup $K\subset T^d,$ where $Lie\, (K)=\ker(\pi)$ and
 $\pi:\mathbb{R}^{d}\rightarrow \mathbb{R}^n$ is the linear map given by
$$\pi(e_i)=v_{1,i},\hskip1mm i\in\{1,\ldots,d_1\},\,\,\,\,
\pi(e_{d_1+i})=v_{2,i},\hskip1mm i\in\{1,\ldots,d_2\}.$$ 
As before, we take $\Phi \colon \bb{C}^d \rightarrow \bb{R}^d$ given by 
$$\Phi(z_1,\ldots, z_d) = -\frac{1}{2}(|z_1|^2,\ldots,|z_d|^2)+(\underbrace{\lambda_{1},\ldots,\lambda_{1}}_{d_1},\underbrace{\lambda_{2},\ldots,\lambda_{2}}_{d_2})$$ as a moment map (compare with equation \eqref{momentmap}).
 We split the group $K$ into three subgroups $K=K_1\times K_2 \times K_3.$ $K_1$ and $K_2$ are circles that include into $T^d$ in the following ways (respectively)
$$t\hookrightarrow(t^{m_{1,1}},\ldots,t^{m_{1,d_1}},1,\ldots,1)\hskip1mm \textrm{and}\hskip1mm t\hookrightarrow(1,\ldots,1,t^{m_{2,1}},\ldots,t^{m_{2,d_2}}),$$ where
$m_{k,i}\in\mathbb{N},\hskip1mm i\in\{1,\ldots,d_k\}$ are the constants from Lemma \ref{sum.normals} such that
$\sum_{i=1}^{d_k}m_{k,i}v_{k,i}=0,$ for $k\in\{1,2\}.$ $K_3$ is any choice of complementary group.
We perform the reduction by $K$ in three stages. First we reduce by $K_1$ and $K_2$ to obtain $\mathbb{CP}(m_{1,1},\ldots,m_{1,d_1})\times\mathbb{CP}(m_{2,1},\ldots,m_{2,d_2})$ and then reduce it by $K_3$.
Note that we are not performing reductions prescribed by polytopes $\widetilde{\Delta}^{1}$, $\widetilde{\Delta}^{2}$. These polytopes may not be simple. We are just using the information encoded in these polytopes to divide the reduction prescribed by a simple polytope $\Delta$ into stages.
 Here are the details. Let $\iota_j:Lie(K_j)\rightarrow Lie(T^d)$ be the map induced by inclusion $K_j\rightarrow T^d,$ for $j\in\{1,2,3\}.$ Then $\iota_1^{\ast}\circ\Phi$ is a moment map corresponding to Hamiltonian action of the circle $K_1$ on $\mathbb{C}^d.$
 $\mathbb{CP}(m_{1,1},\ldots,m_{1,d_1})\times\mathbb{C}^{d_2}$ is a quotient of the level $(\iota_1^{\ast}\circ\Phi)^{-1}(0)$ by the circle $K_1$. Let $p_1:(\iota_1^{\ast}\circ\Phi)^{-1}(0)\rightarrow \mathbb{CP}(m_{1,1},\ldots,m_{1,d_1})\times\mathbb{C}^{d_2}$ be the quotient map. As a moment map for the $K_2$ action on $\mathbb{CP}(m_{1,1},\ldots,m_{1,d_1})\times\mathbb{C}^{d_2}$ take $\mu_2$ which satisfies $\mu_2\circ p_1=\iota_2^{\ast}\circ\Phi$ on $(\iota^{\ast}_1\circ\Phi)^{-1}(0)$.
This way we obtain $\mathbb{CP}(m_{1,1},\ldots,m_{1,d_1})\times\mathbb{CP}(m_{2,1},\ldots,m_{2,d_2})$ as a quotient of the level $\mu_2^{-1}(0)$ by the circle  $K_2.$ Call the quotient map $p_2:\mu_2^{-1}(0)\rightarrow \mathbb{CP}(m_{1,1},\ldots,m_{1,d_1})\times\mathbb{CP}(m_{2,1},\ldots,m_{2,d_2})$. Finally, as a moment map for $K_3$ action take $\mu_3:\mathbb{CP}(m_{1,1},\ldots,m_{1,d_1})\times\mathbb{CP}(m_{2,1},\ldots,m_{2,d_2})\rightarrow (Lie K_3)^{\ast}$ such that $\mu_3\circ p_2\circ p_1=\iota_3^{\ast}\circ\Phi$ on the intersection $(\iota^{\ast}_1\circ\Phi)^{-1}(0)\cap(\iota^{\ast}_2\circ\Phi)^{-1}(0)$. $\M$ is the quotient of the level $\mu_3^{-1}(0)$ by the group $K_3$. The central torus fiber of $\mathbb{CP}(m_{1,1},\ldots,m_{1,d_1})\times\mathbb{CP}(m_{2,1},\ldots,m_{2,d_2})$ is $T_{0,1}\times T_{0,2},$ where
 $$T_{0,j}=\{[z_1,\ldots,z_{d_j}]\in\mathbb{CP}(m_{j,1},\ldots,m_{j,d_j})|\hskip1mm |z_1|^2=\cdots=|z_{d_j}|^2=2\lambda_{j}\},$$
 $j\in\{1,2\}$. In order to prove that this reduction is centered, we have to show that
$T_{0,1}\times T_{0,2}\subseteq\mu_3^{-1}(0).$
Let $([z^1_1,\ldots,z^1_{d_1}],[z^2_1,\ldots,z^2_{d_2}])\in T_{0,1}\times T_{0,2}$ be an arbitrary point. Then
\begin{align*}
&\mu_3([z^1_1,\ldots,z^1_{d_1}],[z^2_1,\ldots,z^2_{d_2}])=\mu_3(p_2 \circ p_1(z^1_1,\ldots,z^1_{d_1},z^2_1,\ldots,z^2_{d_2}))\\&=\iota_3^{\ast}(\Phi(z^1_1,\ldots,z^1_{d_1},z^2_1,\ldots,z^2_{d_2}))\\
&=\iota_3^{\ast}(-\frac{1}{2}(2\lambda_1,\ldots,2\lambda_1,2\lambda_2,\ldots,2\lambda_2)+(\lambda_1,\ldots,\lambda_1,\lambda_2,\ldots,\lambda_2))\\&=\iota_3^{\ast}(0)=0.
\end{align*}
\end{proof}

\begin{remark}\label{centralfiber} The image of the central fiber of the product of weighted projective spaces under the projection $p_3:\mu_3^{-1}(0)\rightarrow \M$ is exactly the central fiber of $\M$.
Indeed, as explained in the beginning of this section, the moment map $\Phi_T$ for the action of the residual torus $T^d /K $ on $\M$ satisfies $\Phi_T \circ p = \Phi$ on $(\iota^{\ast}\circ\Phi)^{-1}(0)$, where  $p=p_3 \circ p_2 \circ p_1 \colon  (\iota^{\ast}\circ\Phi)^{-1}(0) \rightarrow \M$ is the projection. 
For any $p_3([z^1_1,\ldots,z^1_{d_1}],[z^2_1,\ldots,z^2_{d_2}])\in p_3(T_{0,1}\times T_{0,2}) \subset \M$ it holds that
$$\Phi_T (\,p_3([z^1_1,\ldots,z^1_{d_1}],[z^2_1,\ldots,z^2_{d_2}])\,)=\Phi_T(p(z^1_1,\ldots,z^1_{d_1},z^2_1,\ldots,z^2_{d_2}))$$
$$=\Phi (z^1_1,\ldots,z^1_{d_1},z^2_1,\ldots,z^2_{d_2})=0.$$
\end{remark}

\subsection{Non-displaceability and Proof of Theorem \ref{posled1}}\label{non-displ}
Let $\M$ be a compact toric symplectic orbifold and $\mu$ be a choice of moment map. 
Adding a constant to $\mu$ if necessary, 
we can assume that the moment map image $\Delta=\mu(\M)$ is a polytope centered at the origin. 
A Lagrangian submanifold $L\subset \M$ is {\bf non-displaceable} if for every Hamiltonian diffeomorphism $\varphi \colon \M \rightarrow \M$ we have $\varphi(L)\cap L\neq\emptyset.$
 For each point $p\in\textrm{int}\Delta$ the level $\mu^{-1}(p)$ is a Lagrangian submanifold of $\M,$ because the action of the torus is free on the set that maps under the moment map $\mu$ to $\textrm{int}\Delta$. 
 The goal of this section is to prove Theorem \ref{posled1} saying that the central torus fiber, $\mu ^{-1}(0)$, is a non-displaceable Lagrangian.
\begin{proof}
Abreu and Macarini in \cite[Corollary 3.4 (i)]{AbreuMacarini:2013} showed that if $\M$ is a symplectic reduction of $\widetilde{\M}$ and a Lagrangian torus fiber $T\subset \M$ is a quotient of the corresponding fiber $\widetilde{T}\subset\widetilde{\M}$, where $\widetilde{T}$ is non-displaceable, then $T\subset \M$ is also non-displaceable. Even though Abreu and Macarini work in the setting of smooth toric manifolds, their result generalizes to toric orbifolds.

According to Theorem \ref{druga}, $\M$ is a centered symplectic reduction of a Cartesian product of $M+N$ weighted projective spaces.

The central torus fiber $T_0$ of a weighted projective space $\mathbb{CP}(m_1,m_2\ldots,m_{d})$ is a non-displaceable Lagrangian torus fiber (for $\mathbb{CP}(1,m_2,\ldots,m_{d})$ by the result of Cho-Poddar \cite{ChoPoddar:2012} ; by Gonzales-Woodward \cite{Gonzales.Woodward:2012} in the general case). Moreover from the work of Woodward, \cite{Woodward:2011}, it follows that $T^1_0\times\cdots\times T^{M+N}_0$ is a non-displaceable Lagrangian torus fiber of a Cartesian product of $M+N$ weighted projective spaces. (Though not explicitly, the paper implies that the appropriate version of Floer homology group for $T^1_0 \times T^2_0$ in $\bb{CP}_1 \times \bb{CP}_2$ is a tensor product of Floer homology groups for $T^j_0$ in $\bb{CP}_j$ which are non-zero.)  Hence, the quotient of the level $T^1_0\times\cdots\times T^{M+N}_0$ is a non-displaceable Lagrangian torus fiber in a compact symplectic toric orbifold $\M$. This quotient is exactly the central fiber of $\M$ (see Remark \ref{centralfiber}).
\end{proof}

\section{Connections with Gromov width}\label{section gromovwidth}
In this Section we explain how Theorem \ref{prva} implies some results about the Gromov width of symplectic toric manifolds.
In 1985 Mikhail Gromov proved his famous Non-squeezing Theorem saying that a ball
$B^{2n}(r)$ of a radius $r$, in a symplectic vector space $\bb{R}^{2n}$ with the usual symplectic structure, cannot be symplectically embedded into $B^2(R)\times \bb{R}^{2n-2}\subset \bb{R}^{2n}$
unless $r\leq R$. This motivated the definition of an invariant called the Gromov width.
Consider the ball of capacity $a$
$$ B^{2n}_a = \Big \{ z \in \bb{C}^n \ \Big | \ \pi \sum_{i=1}^n |z_i|^2 < a \Big \}\subset \bb{R}^{2n} , $$
with the standard symplectic form
$\omega_{std} = \sum dx_j \wedge dy_j$ inherited from $\bb{R}^{2n}$.
The \textbf{Gromov width} of a $2n$-dimensional symplectic manifold $(\M,\omega)$
is the supremum of the set of $a$'s such that $B^{2n}_a$ can be symplectically
embedded in $(\M,\omega)$. It follows from Darboux Theorem that the Gromov width is positive unless $\M$ is a point.

If the manifold $(\M,\omega)$ is equipped with a Hamiltonian action of a torus $T,$ one can use this action to construct explicit embeddings of balls and therefore to calculate the Gromov width. Many such constructions were developed by various authors (see for example: Karshon and Tolman \cite{KarshonTolman:2005} for not necessarily toric actions and \cite{Traynor:1995}, \cite{Schlenk:2005}, \cite{LMS:2013} for toric ones). In what follows we use the result of Latschev, McDuff and Schlenk, \cite[Lemma 4.1]{LMS:2013}, presented here as Proposition \ref{prop diamond}.
As we are to calculate a numerical invariant, a way of identifying the Lie algebra of $S^1$ with the real line $\bb{R}$ is important. Recall from Section \ref{section shrinking} that for us $S^1 = \bb{R} / 2 \pi \bb{Z}$. With this convention the moment map for the standard $S^1$ action on $\bb{C}$ by rotating with speed $1$ is given (up to a translation by a constant) by $z \mapsto -\frac 1 2 |z|^2$.  Define
$$ \Diamond^n(a):= \left\{ (x_1,\ldots,x_n) \in \bb{R}^n ({\bf{x}}) \,|\, \sum_{j=1}^{n} |x_j| < \frac a 2\right\}\subset \bb{R}^n ({\bf x}).$$ 
If $\M^{2n}$ is toric, $\mu$ is the associated moment map and $\Diamond (a) \subset  \textrm{Int}\mu(\M)$ is a subset of the interior of the moment map image, then a subset of $\mu^{-1}(\Diamond^n(a))\cong \Diamond^n(a) \times T^n$ is symplectomorphic to $\Diamond^n(a) \times (0,2\pi)^n \subset \bb{R}^n({\bf x}) \times \bb{R}^n({\bf y})$ with the symplectic structure induced from the standard one on $\bb{R}^n({\bf x}) \times \bb{R}^n({\bf y})$. 
Below we present a result of Latschev, McDuff and Schlenk, \cite[Lemma 4.1]{LMS:2013} which, although stated in dimension $4$, holds also in higher dimensions.
There the authors were using the convention that $S^1= \bb{R} / \bb{Z}$. To translate between the conventions observe that $\Diamond^n(a) \times (0,2\pi)^n$ is symplectomorphic to $\Diamond^n(2 \pi a) \times (0,1)^n\subset \bb{R}^n({\bf x}) \times \bb{R}^n({\bf y})$.

Note that ``twisting" the $T$ action on $\M$ by an orientation preserving automorphism of $T$, which obviously does not affect the Gromov width of $\M$, changes the moment map image by an $SL(n,\bb{Z})$ transformation.

\begin{prop} \cite[Lemma 4.1]{LMS:2013} \label{prop diamond}
 For each $\varepsilon >0$ the ball $B^{2n}_{2 \pi (a- \varepsilon)}$ of capacity $2 \pi (a- \varepsilon)$ symplectically embeds into $\Diamond^n(a) \times (0,2\pi)^n \subset \bb{R}^n({\bf x}) \times \bb{R}^n({\bf y})$. Therefore, if for a toric manifold $(\M^{2n},\omega)$ with moment map $\mu$, we have $\Psi(\Diamond^n(a)) \subset \textrm{Int}\mu(\M)$ for some $\Psi \in SL(n,\bb{Z})$, then the Gromov width of $(\M^{2n}, \omega)$ is at least $2 \pi\,a$.
\end{prop}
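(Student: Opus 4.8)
The statement bundles two assertions: the embedding $B^{2n}_{2\pi(a-\varepsilon)}\hookrightarrow\Diamond^n(a)\times(0,2\pi)^n$, which is the content borrowed from \cite[Lemma 4.1]{LMS:2013}, and the lower bound on the Gromov width that it yields. I would prove them in that order.

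For the embedding, I would first dispose of conventions: as recorded just before the statement, $\Diamond^n(a)\times(0,2\pi)^n$ is symplectomorphic (via $(\mathbf{x},\mathbf{y})\mapsto(2\pi\mathbf{x},(2\pi)^{-1}\mathbf{y})$) to $\Diamond^n(2\pi a)\times(0,1)^n$, so it suffices to embed $B^{2n}_{2\pi(a-\varepsilon)}$ into the latter, which in dimension four is precisely \cite[Lemma 4.1]{LMS:2013}. To see that the argument is dimension-independent I would carry it out one $\bb{C}$-factor at a time. Passing to action-angle coordinates, $B^{2n}_c$ is, away from the coordinate subspaces, symplectomorphic to the open simplex $\{p\in(\bb{R}_{>0})^n:\sum_j p_j<\tfrac{c}{2\pi}\}$ fibered by tori $(\bb{R}/2\pi\bb{Z})^n$, the $i$-th circle collapsing over the facet $\{p_i=0\}$. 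The base case $n=1$ is the elementary fact that an open disc of area $c$ is symplectomorphic to an open rectangle of the same area (Moser, or Greene--Shiohama), hence to $(-\tfrac{c}{4\pi},\tfrac{c}{4\pi})\times(0,2\pi)\subset\Diamond^1(a)\times(0,2\pi)$ once $c<2\pi a$. For the inductive step one ``unrolls'' the torus factor by factor: using the facet $\{p_i=0\}$, over which the $i$-th circle is already collapsed, as a hinge, an explicit symplectic folding trades the circle $\bb{R}/2\pi\bb{Z}$ for the interval $(0,2\pi)$ while shearing the base; after all $n$ unrollings the accumulated shears carry the simplex into (a subset of) the cross-polytope $\Diamond^n(a)$. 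The loss $\varepsilon$ appears because the toric model omits the coordinate subspaces and because the inclusion into the open target must be strict; the standard remedy is to build the embedding for a slightly smaller ball whose compact image then lies in the open region.

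The hard part of this first half is the bookkeeping of the iterated fold: one must check that each unrolling consumes exactly the room that keeps the sheared base inside $\Diamond^n(a)$, so that after $n$ steps nothing has left $\Diamond^n(a)\times(0,2\pi)^n$. This is the computation carried out in \cite[Lemma 4.1]{LMS:2013}; since it proceeds coordinate by coordinate it goes through unchanged in dimension $2n$, which is all that is needed here. (Alternatively one could simply invoke the higher-dimensional form of the lemma as stated.)

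Granting the embedding, the Gromov-width bound is immediate. By the remark preceding the Proposition we may twist the $T$-action on $\M$ by an orientation-preserving automorphism of $T$ --- an operation leaving $(\M^{2n},\omega)$, and hence its Gromov width, unchanged --- so that the moment map image is changed by $\Psi^{-1}$; since $\Psi(\Diamond^n(a))\subset\textrm{Int}\,\mu(\M)$ by hypothesis and $\Psi^{-1}$ is a homeomorphism, after this twist $\Diamond^n(a)=\Psi^{-1}\!\bigl(\Psi(\Diamond^n(a))\bigr)\subset\textrm{Int}\,\mu(\M)$. As recalled just before the statement, $\mu^{-1}(\Diamond^n(a))$ then contains an open subset symplectomorphic to $\Diamond^n(a)\times(0,2\pi)^n$; composing this symplectomorphism with the embedding of the first half gives a symplectic embedding of $B^{2n}_{2\pi(a-\varepsilon)}$ into $\M$ for every $\varepsilon>0$. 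Hence the Gromov width of $(\M^{2n},\omega)$ is at least $\sup_{\varepsilon>0}2\pi(a-\varepsilon)=2\pi a$.
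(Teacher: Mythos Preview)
The paper does not prove this proposition at all: it is quoted from \cite[Lemma~4.1]{LMS:2013} and stated without proof, with only the remark that the four-dimensional argument there ``holds also in higher dimensions'' and the observation about translating between the $S^1=\bb{R}/2\pi\bb{Z}$ and $S^1=\bb{R}/\bb{Z}$ conventions. So there is nothing in the paper to compare your argument against.

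Your sketch goes well beyond what the paper offers and is broadly in the spirit of the Latschev--McDuff--Schlenk argument: pass to action--angle coordinates so that the ball becomes (off the coordinate subspaces) an open simplex fibred by an $n$-torus, then iteratively unroll the circle factors into intervals, the accumulated shears landing the base in the cross-polytope. The deduction of the Gromov-width lower bound from the embedding is exactly the intended one and matches the paper's surrounding discussion (twist by $\Psi^{-1}\in SL(n,\bb{Z})$, then use that $\mu^{-1}(\Diamond^n(a))$ contains a copy of $\Diamond^n(a)\times(0,2\pi)^n$). As a write-up for this paper, though, your level of detail is more than is needed: the authors treat the proposition as an imported black box, so a one-line citation suffices.
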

The analysis of a moment map image we did in Section \ref{section shrinking} allows us to notice certain diamonds $\Diamond^n(a)$ inside the moment map image.
Let $\M$ be any compact symplectic toric manifold. Following Theorem \ref{prva} we present the corresponding Delzant polytope $\Delta=\{x \in \bb{R}^n\,|\, \langle x, v_j\rangle \leq \lambda_j,\,j=1,\ldots,d\}$ as an intersection of polytopes described in Definition \ref{definition polytopes}. We continue to denote by $t_1$ the time of the first  (possibly unique) ``dimension drop''.  Notice that 
$$\Delta \supset t_1 \cdot \Delta':=\{t_1\cdot x\,|\, x \in \Delta'\}\textrm{ where }\Delta'=\{x \in \bb{R}^n\,|\, \langle x, v_j\rangle \leq 1,\,j=1,\ldots,d\}.$$
Rational polytopes with the facet presentation of $\Delta'$, i.e., monotone, with coefficient $\lambda=1$, are called reflexive (see for example \cite[Definition 2.3.11]{CLS}).

The (dual version of the) Ewald Conjecture says that for any reflexive polytope $\Delta'$, of dimension $n$, the set $\{v \in \bb{Z}^n \cap \Delta '\,|\,-v \in \bb{Z}^n \cap \Delta '\,\}$ contains some integral vectors $v_1, \ldots, v_n$ that form a basis of $\bb{Z}^n$ (see \cite[Section 3.1]{MD} for the dual version, and \cite[Section 4]{O} for the usual version). 
The convex hull of $\{ \pm v_1, \ldots, \pm v_n\}$ is $SL(n,\bb{Z})$ equivalent to a diamond $\Diamond^n(2)$. Therefore the Ewald Conjecture would imply that $\Diamond^n(2t_1) \subset \Delta$, proving that the Gromov width of $\M$ is at least $4 \pi t_1$ (via Proposition \ref{prop diamond}). The Ewald Conjecture has been verified by \O bro, \cite{O}, for polytopes of dimensions $\leq 8$. 
In higher dimensions it remains an open question. The above argument proves the following corollary.

\begin{cor}\label{cor lowerbound}
Let $\M$ be any compact symplectic toric manifold of dimension less or equal to $ 16$.  
 The Gromov width of $\M$ is at least $4 \pi t_1$, where $t_1$ is the time of the first  (possibly unique) ``dimension drop''.
Moreover, if the Ewald Conjecture turns out to be true, one can remove the assumption $\dim \M \leq 16$ from the above statement.
\end{cor}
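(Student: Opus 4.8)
The plan is to locate, up to an $SL(n,\bb{Z})$ change of coordinates, a diamond $\Diamond^n(2t_1)$ inside the interior of the moment polytope $\Delta=\mu(\M)$, and then to conclude by Proposition \ref{prop diamond} (using, as in the remark preceding it, that twisting the $T$-action by an orientation-preserving automorphism changes $\mu(\M)$ only by an $SL(n,\bb{Z})$ transformation and does not affect the Gromov width). The geometric heart of the matter is the inclusion $\Delta\supseteq t_1\cdot\Delta'$, where $\Delta'=\{x\in\bb{R}^n\mid\langle x,v_j\rangle\leq 1,\ j=1,\ldots,d\}$ is the reflexive polytope sharing the facet normals of $\Delta$.

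First I would verify $t_1\cdot\Delta'\subseteq\Delta$. Rescaling, $t_1\cdot\Delta'=\{y\in\bb{R}^n\mid\langle y,v_j\rangle\leq t_1,\ j=1,\ldots,d\}$, so it suffices to check $\lambda_j\geq t_1$ for every $j$; this follows from Lemma \ref{small.lambda} (for indices $j$ outside $\mathcal{D}_1\cup\cdots\cup\mathcal{D}_{M+1}$ one has $\lambda_j>t_{M+1}\geq t_1$, while for $j\in\mathcal{D}_s$ one has $\lambda_j=t_s\geq t_1$), and the inclusion then follows facet by facet. It is also worth recording that $\Delta'$ is genuinely a reflexive polytope: since $\M$ is a manifold, $\Delta$ is Delzant, hence all $v_j$ are primitive, so $\Delta'$ is a lattice polytope with the origin in its interior, and its polar $\mathrm{conv}(v_1,\ldots,v_d)$ is a lattice polytope as well (the origin lies in its interior by Lemma \ref{sum.normals}). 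This is precisely the step that breaks for orbifolds, where the $v_j$ need not be primitive.

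Since $\dim\Delta'=n\leq 8$, I would then apply \O bro's verification \cite{O} of the (dual) Ewald Conjecture: among the lattice points $v$ of $\Delta'$ with $-v$ also in $\Delta'$ there are $v_1',\ldots,v_n'$ forming a $\bb{Z}$-basis of $\bb{Z}^n$. Pick $\Psi\in SL(n,\bb{Z})$ with $\Psi(e_i)=v_i'$ (after composing with a coordinate reflection if $\det\Psi=-1$, which maps the diamond to itself). By convexity of $\Delta'$, $\Psi(\overline{\Diamond^n(2)})=\mathrm{conv}(\pm v_1',\ldots,\pm v_n')\subseteq\Delta'$, hence $\Psi(\Diamond^n(2t_1))=t_1\cdot\Psi(\Diamond^n(2))\subseteq t_1\cdot\Delta'\subseteq\Delta$; as $\Diamond^n(2t_1)$ is open, this inclusion lands in $\mathrm{Int}\,\mu(\M)$. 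Proposition \ref{prop diamond} applied with $a=2t_1$ gives Gromov width $\geq 2\pi\cdot 2t_1=4\pi t_1$. The last sentence of the corollary is immediate, since the entire argument goes through verbatim in any dimension once the Ewald Conjecture is known there, so the hypothesis $\dim\M\leq 16$ may be dropped.

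The one substantial external ingredient is \O bro's theorem, which I would quote as a black box; what remains is the elementary bookkeeping of the chain $\Diamond^n(2t_1)\subseteq t_1\Delta'\subseteq\Delta$ together with the check that $\Delta'$ is reflexive so that the Ewald Conjecture applies — that check, rather than any hard estimate, being the place where ``manifold'' (as opposed to ``orbifold'') is essential.
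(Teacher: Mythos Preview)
Your proposal is correct and follows essentially the same route as the paper: the paper's argument (given in the paragraph immediately preceding the corollary) is precisely the chain $\Psi(\Diamond^n(2))\subseteq\Delta'$ via the (dual) Ewald Conjecture, rescaled by $t_1$ into $\Delta$, followed by Proposition~\ref{prop diamond}. You simply fill in more detail---the verification of $\lambda_j\ge t_1$, the reflexivity of $\Delta'$, and the passage to the interior---than the paper makes explicit.
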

 
Figure \ref{figure goodlowerbound} presents few examples where this general lower bound for the Gromov width in fact gives the actual Gromov width (use  Proposition \ref{prop upperbound} below to find the actual Gromov width). Figure \ref{figure badlowerbound} shows an example of $\bb{CP}^2$ where the above lower bound is far from the actual Gromov width.

\begin{figure}[h]\centering
\includegraphics[width=7cm]{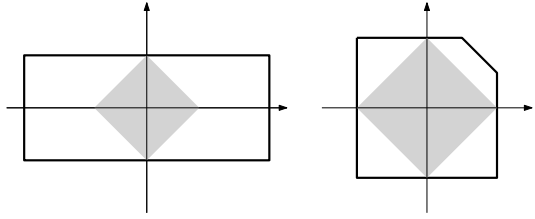}
\caption{Two examples where the lower bound obtained from Corollary \ref{cor lowerbound} gives the actual Gromov width.}
\label{figure goodlowerbound}
\end{figure}

\begin{figure}[h]\centering
\includegraphics[width=3cm]{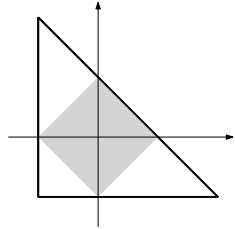}
\caption{An easy example where the lower bound obtained from Corollary \ref{cor lowerbound} is lower than the actual Gromov width.}
\label{figure badlowerbound}
\end{figure}

To find the actual Gromov width one also needs some information about the upper bounds. Here we quote a result of Lu \cite{Lu:2006}. 
\begin{prop}\cite{Lu:2006} \label{prop upperbound}
 Suppose a compact symplectic toric manifold $(\M, \omega)$ is also Fano, i.e. the anticanonical line bundle is ample. Let the Delzant polytope $\Delta=\bigcap_{i=1}^d\{x\in\mathbb{R}^n |\langle x,w_i\rangle\leq\,l_i\},$ be its moment map image, with $w_1, \ldots, w_d$ being the primitive outward normals.
  Then the Gromov width of $(\M, \omega)$ is at most
$$\inf \left \{ \sum_{j \in J} 2 \pi \, a_j l_j \,\,|\,\,a_j \in \bb{Z}_{> 0},\, \sum_{j \in J} a_j w_j=0, \, J \subset \{1,\ldots, d\} \right\}.$$
\end{prop}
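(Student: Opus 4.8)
The plan is to reconstruct Lu's bound from the classical principle that a non-constant $J$-holomorphic sphere passing through the centre of a symplectically embedded ball forces that ball to be small (Gromov, McDuff). I would organize the argument in three steps, the second of which I expect to be the real obstacle.

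\emph{Step 1: from integer relations among the normals to curve classes.} Since $\Delta$ is Delzant, the linear map $\mathbb{Z}^d\to\mathbb{Z}^n$, $e_i\mapsto w_i$, is surjective and its kernel is identified with $H_2(\M;\mathbb{Z})$ through the divisor classes $D_1,\dots,D_d$ Poincar\'e dual to the facets. Hence, given $J\subset\{1,\dots,d\}$ and $a_j\in\mathbb{Z}_{>0}$ with $\sum_{j\in J}a_j w_j=0$, the vector carrying $a_j$ on $J$ and $0$ elsewhere is the image of a unique class $A=A_{J,(a_j)}\in H_2(\M;\mathbb{Z})$ with $\langle D_i,A\rangle=a_i$ for $i\in J$ and $\langle D_i,A\rangle=0$ otherwise. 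Using that $[\omega]$ is Poincar\'e dual to $\sum_i l_i D_i$ (with the overall factor $2\pi$ coming from the convention $S^1=\mathbb{R}/2\pi\mathbb{Z}$ fixed in Section \ref{section shrinking}) one gets $\omega(A)=2\pi\sum_{j\in J}a_j l_j$, and since $c_1(\M)$ is Poincar\'e dual to $\sum_i D_i$ one gets $\langle c_1(\M),A\rangle=\sum_{j\in J}a_j>0$, so $A$ is a ``positive'' class compatible with the Fano hypothesis.

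\emph{Step 2: a rational curve through every point.} Here one must show that, because $-K_\M$ is ample, for every point $p\in\M$ and every $\omega$-compatible almost complex structure $J$ there is a non-constant genus-zero $J$-holomorphic stable map in class $A$ (or in some effective class $A'$ with $0<\omega(A')\le\omega(A)$) whose image contains $p$. The cleanest implementation is to pass from $A$ to a class on an extremal ray of the Mori cone of the Fano toric manifold and invoke the non-vanishing of the corresponding one-pointed genus-zero Gromov--Witten invariant, together with Gromov compactness to upgrade ``generic $p$'' to ``every $p$''. This imports the non-trivial input of toric Gromov--Witten theory and is precisely where Lu's argument does its work; I expect it to be the main obstacle, since it requires controlling the Mori cone of an arbitrary Fano toric manifold and the associated curve counts, not merely a soft symplectic argument.

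\emph{Step 3: the monotonicity estimate.} Fix $\varepsilon>0$ and suppose $B^{2n}_a$ embeds symplectically in $(\M,\omega)$ with image a ball centred at a point $p$. Choose a compatible $J$ on $\M$ agreeing with the standard integrable structure on the embedded ball, and let $u$ be the $J$-holomorphic curve in class $A$ through $p$ produced in Step 2. The preimage $u^{-1}(B^{2n}_a)$ is a union of proper minimal surfaces in the Darboux ball passing through its centre, so Gromov's monotonicity inequality gives $\mathrm{area}\bigl(u|_{u^{-1}(B^{2n}_a)}\bigr)\ge a$; since this area is at most $\int u^*\omega=\omega(A)$, we conclude $a\le\omega(A)=2\pi\sum_{j\in J}a_j l_j$. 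Letting $a$ range over capacities of embeddable balls, and then taking the infimum over all admissible pairs $(J,(a_j)_{j\in J})$, yields the stated upper bound for the Gromov width. (If Step 2 only produces a cusp curve, apply the inequality to the component through $p$ and use $\omega$-positivity of the remaining components, which only helps.)
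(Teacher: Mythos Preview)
The paper does not prove this proposition at all: it is quoted verbatim as a result of Lu \cite{Lu:2006} and used as a black box to deduce the corollaries in Section~\ref{section gromovwidth}. There is therefore no ``paper's own proof'' to compare your proposal against.

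Your sketch is nonetheless the right architecture for Lu's argument, so let me flag one point. In Step~2 you phrase the existence of a curve in class $A$ as if it held for \emph{every} positive relation $\sum_{j\in J}a_j w_j=0$, with the passage to an extremal ray presented as a convenience. In fact it is essential: an arbitrary such $A$ need not carry a non-zero one-point Gromov--Witten invariant. What Lu actually uses (via Batyrev) is that for a smooth Fano toric variety the extremal rays of the Mori cone are indexed by primitive collections, and for those classes the relevant invariant is non-zero. An arbitrary $A$ as in Step~1 decomposes in the Mori cone as a non-negative combination of such extremal classes $A_i$, each with $\omega(A_i)>0$, so $\omega(A)\ge\omega(A_i)$ for every summand. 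Hence the Gromov-width bound for the primitive classes already implies the bound stated for the full infimum. Your Step~3 is the standard Gromov monotonicity argument and is fine, including the parenthetical handling of cusp curves.
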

Therefore, if $\Delta = I \times \Delta'$, i.e. $w_{j_1}+w_{j_2}=0$ for some $j_1,j_2 \in \{1,\ldots,d\}$, then the Gromov width of the corresponding toric manifold is at most 
$2 \pi (l_{j_1}+l_{j_2})$.
This argument shows that the lower bound coming from \ref{cor lowerbound} is equal to the actual Gromov width for the examples presented in Figure \ref{figure goodlowerbound}, as well as in Examples \ref{verynewfirstexample} and \ref{secondexample}, and proves the following corollary.
\begin{cor}
 Let $(\M^{2n}_{\Delta}, \omega)$ be a compact toric symplectic Fano manifold of dimension $2n$. Suppose that during the shrinking procedure for $\Delta$ dimension drops only once 
 (monotone case) and that $\widetilde{\Delta}^n_0$ from  Definition \ref{definition polytopes}  is a product $\widetilde{\Delta}^n_0=I \times \Delta'$, or,  that the first dimension drop is by $k_1 <n$ and the polytope $\widetilde{\Delta}^{k_1}$ is a product $\widetilde{\Delta}^{k_1}=I \times \Delta'$. Then the Gromov width of $(\M^{2n}_{\Delta}, \omega)$ is equal to $4 \pi \,t_1$, where $t_1$ is the time of the first ``dimension drop" in the shrinking procedure.
\end{cor}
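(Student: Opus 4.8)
The plan is to trap the Gromov width of $(\M^{2n}_\Delta,\omega)$ between the lower bound $4\pi t_1$ already supplied by Corollary~\ref{cor lowerbound} and a matching upper bound $4\pi t_1$ read off from Lu's estimate, Proposition~\ref{prop upperbound}; the product hypothesis is exactly what makes the latter come out equal to $4\pi t_1$. For the lower bound I would simply invoke Corollary~\ref{cor lowerbound}: one has $\Delta\supset t_1\cdot\Delta'$ with $\Delta'=\{x\in\mathbb{R}^n\mid\langle x,v_j\rangle\leq1,\ j=1,\ldots,d\}$ a reflexive polytope, so an $SL(n,\mathbb{Z})$-image of $\Diamond^n(2t_1)$ lies in $\Delta$ and Proposition~\ref{prop diamond} gives Gromov width $\geq 2\pi\cdot 2t_1=4\pi t_1$.

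For the upper bound, let $w_{j_1},w_{j_2}$ be the primitive normals of the interval factor $I$ occurring in $\widetilde{\Delta}^n_0=I\times\Delta'$ (monotone case) or in $\widetilde{\Delta}^{k_1}=I\times\Delta'$ (case of a first drop by $k_1<n$), and apply Proposition~\ref{prop upperbound} with $J=\{j_1,j_2\}$ and $a_{j_1}=a_{j_2}=1$. The facts to check are: (a) $j_1,j_2$ index facets of $\Delta$ with $w_{j_1}+w_{j_2}=0$: in the monotone case the inequalities indexed by $\mathcal{D}_1$ that define $\widetilde{\Delta}^n_0$ are among the facet inequalities of $\Delta$, so the pair of opposite normals of $I$ is a pair of opposite facet normals of $\Delta$; in the case $k_1<n$ one uses that $\pi_1(v_i)=0$ for $i\in\mathcal{D}_1$, whence $v_i=(\pi_1^\perp(v_i),0)$ and a relation $\pi_1^\perp(v_{j_1})=-\pi_1^\perp(v_{j_2})$ in $\mathbb{R}^{k_1}$ lifts to $v_{j_1}=-v_{j_2}$ in $\mathbb{R}^n$; (b) since $\M$ is a manifold, $\Delta$ is Delzant and so has trivial labeling, giving $v_{j_i}=w_{j_i}$ and $l_{j_i}=\lambda_{j_i}$; (c) $j_1,j_2\in\mathcal{D}_1$ and, $\Delta$ being centered, $\mathcal{D}_1=\{i\mid\lambda_i=t_1\}$, so $\lambda_{j_1}=\lambda_{j_2}=t_1$. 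Feeding this into Proposition~\ref{prop upperbound} yields Gromov width $\leq\sum_{j\in J}2\pi a_j l_j=2\pi(l_{j_1}+l_{j_2})=4\pi t_1$, and combining with the lower bound completes the proof.

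I expect the main obstacle to be point~(a) in the case $k_1<n$: one has to be sure that the pair of opposite normals extracted from the lower-dimensional product polytope $\widetilde{\Delta}^{k_1}$ genuinely comes from opposite \emph{facet} normals of the original polytope $\Delta$, and not merely from normals that became opposite after applying the orthogonal projection $\pi_1^\perp$. The identity $\pi_1(v_i)=0$ for $i\in\mathcal{D}_1$, recorded in Section~\ref{section shrinking}, is precisely what legitimises this lifting; granting it, the remainder is a direct substitution into Propositions~\ref{prop diamond} and~\ref{prop upperbound} together with the trivial-labeling and centeredness remarks of Sections~\ref{section shrinking} and~\ref{section centered reduction}.
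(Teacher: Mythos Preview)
Your proposal is correct and follows essentially the same approach as the paper: combine the lower bound $4\pi t_1$ from Corollary~\ref{cor lowerbound} with the upper bound from Proposition~\ref{prop upperbound} applied to a pair of opposite primitive normals coming from the interval factor $I$. Your careful treatment of point~(a) in the case $k_1<n$---using $\pi_1(v_i)=0$ for $i\in\mathcal{D}_1$ to lift the relation $\pi_1^\perp(v_{j_1})=-\pi_1^\perp(v_{j_2})$ to $v_{j_1}=-v_{j_2}$, and then invoking primitivity (trivial labeling) to conclude $w_{j_1}+w_{j_2}=0$---makes explicit what the paper leaves implicit in the sentence preceding the corollary.
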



\end{document}